\documentclass[reqno, english, 12pt]{amsart}
\pdfoutput = 1 
\usepackage[T1]{fontenc}

\usepackage[dvipsnames]{xcolor} 
\usepackage{amsfonts,amsmath,amssymb,amsthm,bbm,mathtools,comment}

\usepackage[shortlabels]{enumitem}
\usepackage{cancel}
\usepackage[hyphens]{url} 
\usepackage[linktoc = all, hidelinks, colorlinks, unicode=true, pagebackref=true]{hyperref} 
\usepackage[capitalise, compress, nameinlink, noabbrev]{cleveref} 
  \hypersetup{linkcolor={blue!70!black}, citecolor={green!70!black}, urlcolor={blue!70!black}}

\usepackage[mathscr]{euscript}
\usepackage{adjustbox,tikz,calc,graphics,babel,standalone}
\usetikzlibrary{decorations.pathmorphing,shapes,fit,positioning}
\usetikzlibrary{shapes.misc,calc,intersections,patterns,decorations.pathreplacing}
\usetikzlibrary{arrows,shapes,positioning}
\usetikzlibrary{decorations.markings,quotes,arrows.meta}
\usepackage[final]{microtype}
\usepackage[numbers]{natbib}
\usepackage{cmtiup}
\usepackage{graphicx}
\usepackage{caption}
\usepackage{subcaption}
\usepackage{verbatim}
\usepackage{array}
\usepackage[frame,cmtip,arrow,matrix,line,graph,curve]{xy}
\usepackage{pifont}
\usepackage{todonotes}
\usetikzlibrary{topaths,calc} 
\tikzstyle{vertex} = [fill,shape=circle,node distance=80pt]
\tikzstyle{edge} = [opacity=0.4,fill opacity=0.0,line cap=round, line join=round, line width=40pt]
\tikzstyle{elabel} =  [fill,shape=circle,node distance=30pt]
\tikzstyle{circ} = [draw, fill=SpringGreen, circle, inner sep=0.12cm]
\tikzstyle{square} = [draw, fill=RedOrange, rectangle, inner sep=0.15cm]
\tikzstyle{triangle} = [draw, fill=Turquoise, regular polygon, regular polygon sides=3, inner sep=0.08cm]

\usepackage[margin = 2.3cm,foot=1cm]{geometry}

\allowdisplaybreaks

\theoremstyle{plain}
\newtheorem{theorem}{Theorem}[section]		
\newtheorem{lemma}[theorem]{Lemma}
\newtheorem{claim}[theorem]{Claim}

\newtheorem{corollary}[theorem]{Corollary}

\theoremstyle{remark}

\crefname{theorem}{theorem}{theorems}
\Crefname{theorem}{Theorem}{Theorems}

\crefname{lemma}{lemma}{lemmas}
\Crefname{lemma}{Lemma}{Lemmas}

\crefname{claim}{claim}{claims}
\Crefname{claim}{Claim}{Claims}

\crefname{proposition}{proposition}{propositions}
\Crefname{proposition}{Proposition}{Propositions}

\crefname{corollary}{corollary}{corollaries}
\Crefname{corollary}{Corollary}{Corollaries}

\crefname{conjecture}{conjecture}{conjectures}
\Crefname{conjecture}{Conjecture}{Conjectures}

\crefname{problem}{problem}{problems}
\Crefname{problem}{Problem}{Problems}

\crefname{observation}{observation}{observations}
\Crefname{observation}{Observation}{Observations}

\crefname{example}{example}{examples}
\Crefname{example}{Example}{Examples}

\crefname{question}{question}{questions}
\Crefname{question}{Question}{Questions}

\crefname{definition}{definition}{definitions}
\Crefname{definition}{Definition}{Definitions}

\crefname{construction}{construction}{constructions}
\Crefname{construction}{Construction}{Constructions}

\renewcommand{\emptyset}{\varnothing}
\renewcommand{\le}{\leqslant}
\renewcommand{\leq}{\leqslant}
\renewcommand{\ge}{\geqslant}
\renewcommand{\geq}{\geqslant}

\let\originalleft\left
\let\originalright\right
\renewcommand{\left}{\mathopen{}\mathclose\bgroup\originalleft}
\renewcommand{\right}{\aftergroup\egroup\originalright}

\makeatletter
\def\imod#1{\allowbreak\mkern10mu({\operator@font mod}\,\,#1)}
\makeatother

\author{
Ant\'onio Gir\~ao and Zach Hunter}
\thanks{
AG: Department of Mathematics, University College London, London, WC1E 6BT, UK.
E-mail: \texttt{a.girao@ucl.ac.uk}\\
ZH: Department of Mathematics, ETH, Z\"urich, Switzerland. Research supported in part by SNSF grant 200021-228014. Email: \texttt{zach.hunter@math.ethz.ch}.
}

\title{Induced subdivisions of $K_{d+1}$ in graphs of high girth}

\begin{document}
\maketitle
\begin{abstract}
    In this paper, we show that for all $k\geq 10^8$, every graph with minimum degree $k$ and girth at least $10^8$ contains an \textit{induced} subdivision of a $K_{k+1}$. This answers a problem asked by K\"uhn and Osthus (originally attributed to Shi).
\end{abstract}

\section{Introduction}

A central theme in extremal graph theory is concerned with showing that graphs with sufficiently high but constant average degree have a large complete graph in some sense (e.g., as a minor). For example, classical results of Koml\'os--Szemer\'edi and of Bollob\'as--Thomason show that average degree $\Theta(t^2)$ guarantees a subdivision (topological minor) of $K_t$~\citep{KomlosSzemeredi1994TopologicalCliques,BollobasThomason1998TopologicalCompleteSubgraphs} and this is tight up to the implied constant. 

Soon after these results, Mader asked whether this bound could be significantly improved for graphs with girth at least $6$. This was recently resolved by Liu and Montgomery~\citep{LiuMontgomery2017MaderC4Free}. Using the Koml\'os--Szemer\'edi framework of sublinear expanders, they proved that there is an absolute constant $C$ such that every such graph $G$ with average degree at least $Ck$ contains a subdivision of $K_k$.

It is then natural to wonder whether one can go even further; if it is true that every graph with minimum degree $k-1$ and large enough girth must contain a subdivision of a complete graph on $k$ vertices. Obviously if true this would be tight on the minimum degree condition. This problem was answered by Mader~\cite{Mader} and later the girth condition was improved by K\"uhn and Osthus~\citep{KuhnOsthus2006ImprovedBounds}. 

A high girth hypothesis is a natural way to force local sparsity and in~\citep{KuhnOsthus2006ImprovedBounds} the authors asked (see Problem~3, attributed originally to Shi) whether the same phenomenon holds for \textit{induced} subdivisions of large cliques. More precisely, they asked whether for every $r$, there is an $h(r)$ such that every graph with minimum degree at least $r$ and girth at least $h(r)$ contains an \textit{induced} subdivision of a $K_{r+1}$.

Our main result gives a positive answer in a strong sense, showing that $h(r)$ is bounded from above by an absolute constant. 

\begin{theorem}\label{thm: main}
    Let $G$ be a graph with minimum degree $d\geq 10^8$ and girth at least $10^8$. Then $G$ contains an induced subdivision of $K_{d+1}$.
\end{theorem}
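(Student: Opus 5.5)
We plan to build the subdivision greedily: choose $d+1$ branch vertices that are pairwise very far apart, then join every pair by an induced path while keeping the whole union induced. Fix a length scale $L$ (a large constant, much smaller than the girth $g\ge 10^8$).

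The first step is a local expansion statement. In a graph of minimum degree $d$ and girth $g$, the ball of radius $r<g/2$ around a vertex $v$ is a tree. It has at least $d(d-1)^{r-1}$ leaves at distance exactly $r$. So from any vertex there are many short geodesic paths branching in all directions.

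The key structural lemma we would aim for is a connectivity/linkage statement in an "avoid a sparse set" form. Suppose $F$ is a subgraph, the partial subdivision built so far, in which every vertex has degree at most $2$ except at most $d+1$ branch vertices. Suppose also that $u,w$ are vertices lying outside the closed neighbourhood of $F$, except through their designated attachment edges. Then there is an induced $u$--$w$ path $P$ such that $N[P]\cap V(F)$ consists only of the endpoints' prescribed contacts.

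To prove this, we would delete $N[F]$ and argue that the remainder still contains a large, well-connected piece reachable from $u$ and $w$. The reason it should work is that degrees are at least $d$, while each vertex of $F$ "blocks" few vertices: a non-branch vertex of $F$ has at most $2$ neighbours on $F$. High girth implies that a vertex outside $F$ has at most one neighbour on any short stretch of $F$. Exploring BFS trees from $u$ and from $w$ in $G-N[F]$, we therefore expect growth at rate close to $d-O(1)$ per level. Once the two trees are large we connect them, and we take a shortest connecting path, which is automatically induced. Shortness, together with the girth bound, ensures that no chords to $F$ appear beyond distance considerations.

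The main obstacle is that $F$ can be long. A subdivision of $K_{d+1}$ has about $d^2/2$ paths, and one vertex outside $F$ may be adjacent to many different vertices of $F$ that lie far apart along $F$. Degree counting alone therefore gives no bound, since $d^2$ paths each of length $L$ could in principle surround a vertex. I would try to handle this with a minimal-counterexample or extremal choice. We would take the subdivision-building process and, among all maximal partial induced subdivisions, pick one minimising total length. We would then show that if some vertex $x\notin F$ has at least $3$ neighbours on $F$, rerouting through $x$ shortens $F$ while keeping it induced. This uses girth to guarantee that rerouting creates no new adjacencies. After this reduction every outside vertex has at most $2$ neighbours in $F$, so each vertex loses at most $2$ from its degree. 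With $d\ge 10^8$, the expansion of the BFS in $G-V(F)$ then survives.

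The remaining bookkeeping has three parts. First, we must choose branch vertices at mutual distance greater than $10L$; this is possible by expansion since the graph is infinite-like locally. Second, each branch vertex needs $d$ distinct neighbours to start its $d$ paths, so it must use exactly all its neighbours, which fits minimum degree $d$ precisely. Third, we must ensure that a branch vertex's unused neighbours are never touched, and reserving these neighbours in advance is what forces the degree-$d$ requirement to be tight.
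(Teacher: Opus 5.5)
There is a genuine gap. The whole argument rests on your ``avoid a sparse set'' linkage lemma, and the justification you sketch for it breaks down at three points.

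First, the BFS runs in $G-N[F]$, so its growth depends on how many neighbours a vertex has in $N(F)$, not in $F$. You never bound this. If $F$ passes through vertices of enormous degree, which the hypotheses allow, then $N(F)$ is huge and a degree-$d$ vertex may have all its neighbours in it.

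Second, the minimal-length rerouting only shortens $F$ when all $F$-neighbours of $x$ lie on a single subdivision path. If $x$ has one neighbour on each of three different paths, inserting $x$ creates chords and there is nothing to shorten. So ``every outside vertex has at most $2$ neighbours in $F$'' does not follow. Girth cannot supply it either: any subdivision of $K_{d+1}$ contains cycles, hence cycles of length at least $10^8$, so $F$ is necessarily long on the girth scale.

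Third, ``once the two trees are large we connect them'' needs connectivity or expansion that minimum degree plus girth does not provide. Two disjoint $d$-regular graphs of girth at least $10^8$ joined by one edge satisfy the hypotheses yet have a bridge. More generally you need $\binom{d+1}{2}$ compatible connections inside a region where every branch vertex still has $d$ neighbours. Passing to a highly connected subgraph, as in Mader's theorem, typically destroys exactly this degree condition. You also do not rule out that sequential greedy routing gets blocked by earlier paths.

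The idea you are missing is to decouple inducedness from routing.

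The paper first treats high-degree vertices separately. With $B$ the set of vertices of degree at least $d^{35}$, the cases where many vertices have two, or exactly one, neighbours in $B$ are settled by random sparsification. The non-induced subdivision theorem is then applied to an auxiliary graph whose $1$- or $3$-subdivision is induced in $G$ (Lemma~\ref{lem: unbalanced} and Case~1).

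In the remaining bounded-degree case (Lemma~\ref{lem: maxdegree}), $G$ is partitioned into induced tree-balls around far-apart centres. An auxiliary graph $H$ on a random set of centres has short paths $P_e$ attached to its edges. Keeping only edges whose paths touch no other chosen ball makes paths of disjoint edges vertex-disjoint and non-adjacent. A Local Lemma argument shows that $\delta(H)\ge d^6$ and that many centres remain branchable.

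Lemma~\ref{lem: connectedgood}, via Lemma~\ref{lem: boundary}, then gives a $100d^2$-connected piece in which many branchable centres keep all their $H$-edges. Lemma~\ref{lem: linkedness} routes all $\binom{d+1}{2}$ connections simultaneously, and inducedness follows automatically from the structure, with no rerouting.

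Your plan has no counterpart to this pre-sparsified auxiliary graph, to the high-connectivity piece that preserves full degree at branch vertices, or to the separate handling of high-degree vertices. These are where the real difficulty lies.
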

\noindent We did not try to optimize either of the above constants, as doing so would make the paper unnecessarily more difficult to read. However, we believe much lower constants would be possible using the exact same techniques.

\section{Preliminaries}
In this section, we collect some well-known results, along with some easy lemmas. 

\begin{lemma}[Lov\'asz Local Lemma~\citep{ErdosLovasz1975LLL,AlonSpencerProbMethod}]\label{lem: lll}
    Let $(\Omega, \mathbb{P})$ be a probability space and $A_1,\ldots, A_t$ be events where $\mathbb{P}[A_i]\leq p$. Moreover, suppose that for every $i\in [t]$, $A_i$ is independent of all events except at most $d$. Then, provided $p\leq \frac{1}{e(d+1)}$, we have $$\mathbb{P}\bigl[\cap_{i\in [t]}\overline{A_i}\bigr]>(1-ep)^t.$$
\end{lemma}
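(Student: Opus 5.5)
This statement is the symmetric Lov\'asz Local Lemma, so the proof is the standard inductive argument in the asymmetric form, followed by a crude lower bound on the product. I will prove the asymmetric version with weights $x_i = \frac{1}{d+1}$. For each $i$, let $\Gamma(i)$ be a set of at most $d$ indices such that $A_i$ is mutually independent of the family $\{A_j : j\notin \Gamma(i)\cup\{i\}\}$. The key claim is that for every $S\subseteq [t]$ and every $i\notin S$,
\[
\mathbb{P}\Bigl[A_i \,\Big|\, \bigcap_{j\in S}\overline{A_j}\Bigr]\leq x_i ,
\]
whenever the conditioning event has positive probability. Positivity follows along the way from the induction.

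The claim is proved by induction on $|S|$. Split $S$ into $S_1=S\cap\Gamma(i)$ and $S_2=S\setminus S_1$. If $S_1=\emptyset$, mutual independence gives the bound $p\leq x_i$ directly. Otherwise, write the conditional probability as a ratio. The numerator $\mathbb{P}[A_i\cap \bigcap_{S_1}\overline{A_j}\mid \bigcap_{S_2}\overline{A_j}]$ is at most $\mathbb{P}[A_i\mid\bigcap_{S_2}\overline{A_j}]=\mathbb{P}[A_i]\leq p$, by independence. For the denominator, order $S_1=\{j_1,\dots,j_r\}$ and expand by the chain rule:
\[
\mathbb{P}\Bigl[\bigcap_{S_1}\overline{A_j}\,\Big|\,\bigcap_{S_2}\overline{A_j}\Bigr]=\prod_{\ell}\Bigl(1-\mathbb{P}\bigl[A_{j_\ell}\mid \cdots\bigr]\Bigr).
\]
Each conditioning set here is strictly smaller than $S$, so the induction hypothesis bounds each factor below by $1-x_{j_\ell}$. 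Hence the denominator is at least $(1-\frac1{d+1})^{d}\geq e^{-1}$. The ratio is therefore at most $ep\leq \frac{1}{d+1}$, which is exactly the hypothesis $p\le \frac1{e(d+1)}$.

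Finally, apply the chain rule to all of $[t]$:
\[
\mathbb{P}\Bigl[\bigcap_{i\in[t]}\overline{A_i}\Bigr]=\prod_{i}\Bigl(1-\mathbb{P}\bigl[A_i\mid \textstyle\bigcap_{j<i}\overline{A_j}\bigr]\Bigr).
\]
The claim bounds each conditional probability by $\frac1{d+1}$. To match the stated bound $(1-ep)^t$, I will sharpen the estimate: the argument above actually shows each conditional probability is at most $p/(1-\frac1{d+1})^{d}\leq ep$. Then each factor is at least $1-ep$, and the product is at least $(1-ep)^t$.

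The main obstacle is getting the \emph{strict} inequality. I would handle it by noting that $(1-\frac1{d+1})^d>e^{-1}$ strictly, so each conditional probability is $<ep$ when $p>0$. When $p=0$, the intersection has probability $1>(1-0)^t$ fails to be strict only if we read it as $1>1$. In that degenerate case I would either accept non-strictness or, as is standard, state the bound as $\geq$. This is immaterial for the applications in the paper.
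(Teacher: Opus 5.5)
Your proof is correct. The paper gives no proof of this lemma; it only cites Erd\H{o}s--Lov\'asz and Alon--Spencer, and your induction is the standard textbook argument from those sources.

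Your sharpening to $(1-ep)^t$ is sound. It is equivalent to running the asymmetric lemma with weights $x_i=ep$, which is admissible since $ep(1-ep)^d\ge ep\,(1-\frac{1}{d+1})^d\ge p$. You are also right that the strict inequality as stated fails in the degenerate cases $p=0$ (and $t=0$), where both sides equal $1$. This is harmless, because the paper only uses the non-strict lower bound, with $p=\exp(-d)>0$, in the proof of Lemma~\ref{lem: maxdegree}.
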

\begin{lemma}(see e.g. ~\citep{AlonHooryLinial2002MooreIrregular})\label{lem : girth}
    For every $g,d\geq 3$, any graph with minimum degree $d$ and girth at least $g$ must have at least $d^{g/2}$ vertices.
\end{lemma}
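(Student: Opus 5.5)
The plan is to bound $|V(G)|$ from below by the number of vertices in a ball of radius $r=\lfloor (g-1)/2\rfloor$ around a single vertex. The key point is that girth at least $g$ forces this ball to induce a tree, so every non-backtracking path of length at most $r$ from the centre ends at a distinct vertex. I then hope to compare the resulting count with $d^{g/2}$, but I flag at the outset that this comparison is where the plan breaks for small $g$.

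\textbf{Step 1 (the ball is a tree).} Fix a vertex $v$ and run breadth-first search from it. Suppose two distinct non-backtracking walks of length at most $r$ from $v$ ended at the same vertex. Their union would then contain a cycle of length at most $2r\le g-1$, which is impossible. Hence every vertex at distance $i<r$ from $v$ has exactly one neighbour at distance $i-1$. All its other neighbours, of which there are at least $d-1$ (at least $d$ when $i=0$), lie at distance $i+1$ and are pairwise distinct. Counting level by level gives
\[
|V(G)|\ \ge\ 1+d\sum_{i=0}^{r-1}(d-1)^i .
\]
When $g$ is even, I would also use the edge-centred version: grow the tree from both endpoints of one edge out to depth $g/2-1$. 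This gives $|V(G)|\ge 2\sum_{i=0}^{g/2-1}(d-1)^i$.

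\textbf{Step 2 (comparison with $d^{g/2}$).} It remains to show that the Step~1 count is at least $d^{g/2}$, and I expect this to be the main obstacle; in fact it cannot be done in general. Both Moore-type counts are of order $(d-1)^{\lfloor (g-1)/2\rfloor}$, or $(d-1)^{g/2-1}$ for even $g$. These exponents are strictly smaller than $g/2$, so no manipulation of these counts alone can reach $d^{g/2}$ uniformly in $d,g\ge 3$.

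The bound as worded genuinely fails for small $g$. For example, $K_{d+1}$ has minimum degree $d$ and girth $3$, yet has $d+1<d^{3/2}$ vertices. So the stated inequality cannot be proved in full generality, and the statement should be read up to the loss in the exponent. What Step~1 actually proves, and what the paper can use when it applies this lemma with $g=10^8$ and $d\ge 10^8$, is
\[
|V(G)|\ \ge\ (d-1)^{\lfloor (g-1)/2\rfloor}\ \ge\ d^{\,g/2-O(1)} .
\]
This still gives that $G$ has more than $d^{cg}$ vertices for, say, $c=1/3$, which is presumably the kind of quantitative input needed later. If one wanted sharper constants, the cited Alon--Hoory--Linial irregular Moore bound replaces $d-1$ by an averaged branching factor, but the exponent is still $\lfloor (g-1)/2\rfloor$, so it does not rescue the literal $d^{g/2}$ either.
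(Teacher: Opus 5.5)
Your diagnosis is right. As printed, the statement is false. The paper offers no proof of it, only a pointer to Alon--Hoory--Linial, and their theorem is exactly the Moore bound your Step~1 derives: $n\ge 1+d\sum_{i=0}^{r-1}(d-1)^i$ for $g=2r+1$, and $n\ge 2\sum_{i=0}^{r-1}(d-1)^i$ for $g=2r$. They even allow $d$ to be the average degree. Your counterexample $K_{d+1}$ is valid. The failure is not just a small-girth artifact either: the incidence graph of a projective plane of order $q$ is $(q+1)$-regular with girth $6$, yet has only $2(q^2+q+1)<(q+1)^3$ vertices. Your breadth-first-search argument is the standard one and is correct, and $(d-1)^{\lfloor (g-1)/2\rfloor}$ is a valid lower bound in both parities. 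So this is the right repair of the lemma.

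The paper uses the lemma only implicitly, in the proof of Lemma~\ref{lem: connectedgood}. There it is needed to get $|V(H_i)|\ge d^{5C}$ for a $100d^2$-connected graph of girth at least $20C$; it is not applied with $g=10^8$ and degree $d$, as you guessed. The repaired bound gives $(100d^2-1)^{10C-1}\ge d^{20C-2}\ge d^{5C}$ there, so nothing downstream is affected.

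One point of precision concerns your side remarks $(d-1)^{\lfloor (g-1)/2\rfloor}\ge d^{g/2-O(1)}$ and the exponent $c=1/3$. These are not uniform in $d,g\ge 3$, because $(1-1/d)^{r}$ decays exponentially in $r$ for fixed $d$. For instance, at $d=3$ one has $2^{\lfloor (g-1)/2\rfloor}\le 2^{g/2}<3^{g/3}$ for every $g$. They are harmless in the regime you had in mind ($g=10^8\le d$). Still, the clean corrected statement is $|V(G)|\ge (d-1)^{\lfloor (g-1)/2\rfloor}$ itself.
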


Several times, we shall black-box the following result which finds \textit{non-induced} subdivisions.
\begin{theorem}\label{thm: subdivision}
    Every graph with average degree $10d^2$ contains a subdivision of a $K_{d+1}$~\citep{BollobasThomason1998TopologicalCompleteSubgraphs,KomlosSzemeredi1994TopologicalCliques,LiuMontgomery2017MaderC4Free}.  
\end{theorem}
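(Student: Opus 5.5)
The plan is to reduce the statement to a linkage problem in a highly connected subgraph, using Mader's connectivity lemma and the Thomas--Wollan linkage theorem as black boxes. We only sketch it, since the paper cites it from \citep{BollobasThomason1998TopologicalCompleteSubgraphs,KomlosSzemeredi1994TopologicalCliques,LiuMontgomery2017MaderC4Free}. Put $k=\binom{d+1}{2}$, the number of paths a subdivision of $K_{d+1}$ needs.

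\textbf{Step 1: a highly connected subgraph.} Starting from $G$ with average degree $10d^2$, so $e(G)\geq 5d^2|G|$, we take a minimal subgraph $H$ subject to an edge-density condition of the form $e(H)\geq m|H|-c$. Mader's argument shows such a minimal $H$ is $t$-connected with $t$ of order $m$ and still has average degree about $2m$. We choose $m$ of order $d^2$ so that $H$ is roughly $2k+2(d+1)$-connected. We also want $H$ to keep average degree at least $10k+O(d)$ and minimum degree of order $d^2$.

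\textbf{Step 2: branch vertices and terminals.} Next we pick branch vertices $v_1,\dots,v_{d+1}$ in $H$ greedily. For each $v_i$ we pick $d$ neighbours $x_{i,j}$, with $j\neq i$, so that all $(d+1)+d(d+1)=(d+1)^2$ chosen vertices are distinct. This is possible because every vertex of $H$ has degree of order $d^2$, comfortably more than $2(d+1)^2$. At each stage we choose a new branch vertex outside the $\leq (d+1)^2$ used vertices, and then $d$ of its neighbours that are still unused. We then delete $v_1,\dots,v_{d+1}$ to get $H'$. This costs at most $d+1$ in connectivity and changes the edge density negligibly, so $H'$ is still $2k$-connected with average degree at least $10k$.

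\textbf{Step 3: linking.} By the Thomas--Wollan theorem, every $2k$-connected graph with average degree at least $10k$ is $k$-linked. So $H'$ contains vertex-disjoint paths $P_{ij}$ joining $x_{i,j}$ to $x_{j,i}$ for all pairs $i<j$. Then the branch vertices $v_i$, the edges $v_ix_{i,j}$ and the paths $P_{ij}$ together form a subdivision of $K_{d+1}$. The paths avoid the branch vertices because those were deleted, and disjointness of the paths gives the subdivision.

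The main obstacle is the constant bookkeeping in Steps~1 and~3. We need average degree $10d^2$ to yield, after Mader's reduction and the deletion of branch vertices, a graph that is simultaneously $\approx d^2$-connected and of average degree $\approx 5d^2$, and $5d^2$ only barely exceeds $10k=5d^2+5d$ asymptotically. If the constants do not close, we would fall back to the slack available in weaker forms of the argument. The first option is Bollobás--Thomason's linkedness with a larger constant, applied to a $K_{d'+1}$ with $d'$ a constant fraction smaller. The other option is the Komlós--Szemerédi sublinear-expander route. Either way the stated constant $10$ should be read as the citation's constant rather than something the sketch above certifies exactly.
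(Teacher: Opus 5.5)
Your route (Mader's lemma for a highly connected subgraph, greedy branch vertices with private neighbours, deletion of the branch vertices, then the Thomas--Wollan linkage theorem) is the argument the paper itself points to after Lemma~\ref{lem: linkedness}; the paper only cites the result. The route is sound, and it does give the statement with the constant $10$ for $d\ge 4$ (small $d$ is classical). The defect is your last paragraph: you leave the constant uncertified, and the fallbacks you offer would only prove weaker statements. That doubt rests on a miscalculation. Note also that $5d^2$ does not exceed $10k=5d^2+5d$ at all. Mader's lemma costs only an \emph{additive} amount of edge density, not a factor of two. In the form proved in Diestel's textbook: if $\gamma:=e(G)/|G|\ge 2s$, then $G$ has an $(s+1)$-connected subgraph $H$ with $\delta(H)>\gamma$ and $e(H)/|H|>\gamma-s$.

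Take $\gamma\ge 5d^2$ and $s:=d^2+2d$. Then $H$ is $(d^2+2d+1)$-connected and $\delta(H)>5d^2\ge (d+1)^2$, so your greedy Step 2 goes through. Also $e(H)>(4d^2-2d)|H|$, so the average degree is above $8d^2-4d$, not about $5d^2$. Each $v_i$ has degree less than $|H|$, so deleting them removes fewer than $(d+1)|H|$ edges and $e(H')>(4d^2-3d-1)|H'|$; this is the justification your ``negligibly'' needs. Moreover $H'$ is $(d^2+d)=2k$-connected with more than $2k$ vertices. Since $4d^2-3d-1\ge \tfrac52(d^2+d)=5k$ for every $d\ge 4$, Thomas--Wollan (every $2k$-connected graph with at least $5k|V|$ edges is $k$-linked) applies directly. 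The cases $d\le 3$ are classical: an edge, a cycle, and Dirac's theorem that minimum degree $3$ forces a subdivided $K_4$. You genuinely need this edge-count form of Thomas--Wollan. The weaker form stated as Lemma~\ref{lem: linkedness} would demand connectivity about $5d^2$, which Mader's lemma only supplies from average degree about $20d^2$. That version recovers the theorem only ``up to constants'', as the paper remarks.
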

\noindent In one instance, we will further need a result\footnote{Technically speaking, we will quote the improved constant obtained by Thomas and Wollan \cite{WollanThomas}. But their original bound would also suffice for us.} of Bollob\'as and Thomason on $k$-linkedness \cite{BollobasThomason1998TopologicalCompleteSubgraphs}.
Given a graph $G$ on at least $2k$ vertices, we say $G$ is $k$-linked if for any two disjoint sets $\{x_1,\ldots x_k\},\{y_1,\dots,y_k\}$ of $k$ vertices, we can find vertex-disjoint paths $P_1,\dots,P_k$ where the end-points of $P_i$ are $x_i$ and $y_i$. 

\begin{lemma}\label{lem: linkedness}\cite{BollobasThomason1998TopologicalCompleteSubgraphs,WollanThomas}
    Let $G$ be a $10k$-connected graph then $G$ is $k$-linked. 
\end{lemma}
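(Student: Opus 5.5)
This is the Bollobás--Thomason / Thomas--Wollan linkage theorem, which the paper quotes as a black box. A self-contained proof is not expected here, so I only sketch the standard route. That route (Thomas--Wollan) goes through the intermediate statement: \emph{if $G$ is $2k$-connected and has at least $5k|V(G)|$ edges, then $G$ is $k$-linked.}

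Assuming that statement, the deduction is short. A $10k$-connected graph has minimum degree at least $10k$, so $e(G)\ge 5k|V(G)|$. It is also in particular $2k$-connected and has at least $10k+1\ge 2k$ vertices. Hence $G$ is $k$-linked.

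To prove the intermediate statement, I would argue by contradiction with a minimal counterexample.
\begin{itemize}
\item Fix terminals $x_1,\dots,x_k,y_1,\dots,y_k$ that cannot be linked.
\item Work in a minor-minimal structure: a graph that still has enough edges relative to its vertices, satisfies a rooted connectivity condition ("$2k$-connected to the terminal set"), and still admits no linkage.
\item Contracting any edge not meeting the terminals, or deleting any edge, must destroy either the edge density or the connectivity condition.
\item Minimality under edge deletion and contraction then forces every edge to lie in many triangles. The reason is that contracting $uv$ loses $|N(u)\cap N(v)|+1$ edges, so this quantity must be at least about $5k$.
\item It also forces small separations to behave well with respect to the terminals.
\end{itemize}

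One then takes a vertex $v$ of minimum degree, which is at most about $10k$. Since every edge at $v$ lies in many triangles, the neighbourhood $N(v)$ is dense. The step I expect to be the main obstacle is showing that $G[N(v)]$ is itself highly linked, or contains a large clique minor. That structure is then used to route the $k$ required paths through the neighbourhood. Connectivity supplies, via Menger's theorem, $2k$ disjoint paths from the terminals to $N(v)$, and these are joined inside the linked neighbourhood to produce the linkage, contradicting the choice of counterexample.

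The delicate part is the bookkeeping of the separations of order less than $2k$ that can arise after contraction. This is exactly where the constant $10$ (rather than something larger, as in Bollobás--Thomason's original $22$) comes from. For our purposes any constant would suffice, so the simpler Bollobás--Thomason argument could be substituted.
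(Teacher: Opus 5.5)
Your proposal is correct and matches the paper, which gives no proof of this lemma and simply quotes it from Bollob\'as--Thomason and Thomas--Wollan. Your deduction from the Thomas--Wollan theorem ($2k$-connected with at least $5k|V(G)|$ edges implies $k$-linked), using $\delta(G)\ge 10k$ to get $e(G)\ge 5k|V(G)|$, is exactly how the cited corollary arises; your minimal-counterexample outline of that theorem is only heuristic (minimum degree about half the order does not by itself make $G[N(v)]$ $k$-linked, which is where the real work lies), but nothing in the paper depends on it.
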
\noindent Being $k$-linked is useful, as it allows one to find a subdivision of $K_{\Omega(k^{1/2})}$ in a robust way. In particular, up to constants, it allows one to quickly recover Theorem~\ref{thm: subdivision} (using a theorem of Mader that graphs with average degree $d$ we have as $d/4$-connected subgraph).

Finally, we need a nice result (see, e.g.,~\citep{PenevThomasseTrotignon2014Isolating}) which shows that every graph with high enough average degree contains a highly connected subgraph with ``small boundary''. It shall serve as induced analogue of Mader's result, which will allow us to apply Lemma~\ref{lem: linkedness} for the most technical part of our paper. 

Let $G$ be a graph and $X\subset V(G)$. We define the boundary of $X$ to be
\[
\partial(X)=\{x\in X: \text{ there exists } y\in V(G)\setminus X \text{ with } xy\in E(G)\}.
\]
\begin{lemma}\label{lem: boundary}
    Let $k\geq 2$ be a positive integer and $G$ be a graph with minimum degree at least $4k^2$. Then, $G$ contains a $k$-connected subgraph with more than $4k^2$ vertices such that the boundary has at most $2k^2$ vertices.
\end{lemma}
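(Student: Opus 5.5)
The plan is an extremal choice followed by an uncrossing argument. Consider the family $\mathcal{F}$ of sets $X\subseteq V(G)$ with $|X|>4k^2$ and $|\partial(X)|\le 2k^2$. It is nonempty: $X=V(G)$ has empty boundary, and $|V(G)|>4k^2$ because the minimum degree is at least $4k^2$. Choose $X\in\mathcal{F}$ with $|X|$ minimal and take $H=G[X]$. If $H$ is $k$-connected we are done. So the whole proof is to show that a separation of $H$ produces a strictly smaller member of $\mathcal{F}$.

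\textbf{Setting up a separation.} Suppose $H$ is not $k$-connected. Since $|X|>k$, there is a partition $X=A\cup S\cup B$ with $A,B\neq\emptyset$, $|S|\le k-1$ and no edges of $G$ between $A$ and $B$. Put $X_1=A\cup S$ and $X_2=B\cup S$, and write $a=|\partial(X)\cap A|$ and $b=|\partial(X)\cap B|$.
\begin{itemize}
\item A vertex of $A$ has no neighbours in $B$. So a vertex of $A$ lies in $\partial(X_1)$ only if it already lies in $\partial(X)$, which gives $|\partial(X_1)|\le a+|S|$. Symmetrically, $|\partial(X_2)|\le b+|S|$.
\item If some $v\in A$ is not in $\partial(X)$, then all of its at least $4k^2$ neighbours lie in $A\cup S$. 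Hence $|X_1|>4k^2$, and likewise for $B$.
\end{itemize}

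\textbf{Case 1: both $A$ and $B$ contain a non-boundary vertex.} Both $X_1$ and $X_2$ are then large. Since $a+b\le|\partial(X)|\le 2k^2$, we may assume $a\le k^2$. Then $|\partial(X_1)|\le k^2+k-1\le 2k^2$. So $X_1\in\mathcal{F}$ and $|X_1|<|X|$, contradicting minimality.

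\textbf{Case 2: both $A$ and $B$ consist entirely of boundary vertices.} Then $|X|\le 2k^2+k-1<4k^2$, which contradicts $X\in\mathcal{F}$.

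\textbf{Case 3: $A\subseteq\partial(X)$, while $B$ contains a non-boundary vertex.} This is the step I expect to be the main obstacle. Passing to $X_2$ removes the $|A|$ boundary vertices of $A$ but may add up to $|S|$ new ones, so we get $|\partial(X_2)|\le|\partial(X)|-|A|+|S|$.
\begin{itemize}
\item If $|A|\ge|S|$, then $X_2\in\mathcal{F}$ and again contradicts minimality.
\item If $|A|<|S|$, this simple potential fails. The picture is a small pendant piece hanging off a separator, and repeatedly cutting such pieces off could inflate the boundary.
\end{itemize}

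\textbf{Handling Case 3.} I would first refine the choice of separation. Take $|S|$ minimal, so that every vertex of $S$ has a neighbour in both $A$ and $B$. Among separations with this $|S|$, take $|A|$ minimal.

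If that does not close the gap, I would change the extremal choice to a weighted potential. One option is to minimise $|X|+\lambda|\partial(X)|$ over the sets with $|X|>4k^2$, with the constraint relaxed to something like $|\partial(X)|\le 2k^2$ enforced only at the end. Another is to minimise the number of edges leaving $X$, using that each vertex of $A$ has degree at least $4k^2$ while having fewer than $2k$ neighbours inside $A\cup S$. That forces many edges from the small side $A$ to leave $X$, so deleting $A$ strictly decreases the edge boundary even though $S$ becomes boundary.

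Either way, the degree-counting step is where the real work lies: it must show that a small, fully-boundary side cannot occur at a minimiser. Once it does, Cases 1 and 2 above finish the proof.
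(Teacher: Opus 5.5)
\textbf{Verdict: genuine gap.} The paper gives no proof of this lemma; it quotes it from Penev, Thomass\'e and Trotignon, so your argument has to stand on its own. Your Cases 1 and 2, and the subcase $|A|\ge|S|$ of Case 3, are correct. The subcase $|A|<|S|$ is where it breaks, and refining the separation cannot fix it: for $k\ge 3$ the minimiser of $|X|$ over $\mathcal{F}$ need not be $k$-connected.

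\textbf{Counterexample to the extremal choice.} Build $G$ as follows.
\begin{enumerate}
\item Let $C$ be a clique on $4k^2+1$ vertices, and let $v$ be adjacent to exactly two vertices $c_1,c_2\in C$.
\item Give $2k^2-1$ further vertices of $C$ one private neighbour each in a graph $R$ of minimum degree $4k^2$ and girth at least $5$.
\item Give $v$ its remaining $4k^2-2$ neighbours in $R$. Choose all attachment points far apart, so that $G-E(C)$ has girth at least $5$.
\end{enumerate}
Then $X=C\cup\{v\}$ has $|X|=4k^2+2$ and $|\partial(X)|=2k^2$, so $X\in\mathcal{F}$.

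Now take any $Y\in\mathcal{F}$ with $|Y|\le 4k^2+2$. It has at least $2k^2+1$ vertices with no neighbour outside $Y$. Two such vertices outside $C$ share at most one neighbour, which would force $|Y|\ge 8k^2-1$. So one of them lies in $C$, and hence $C\subseteq Y$. But $\partial(C)$, and $\partial(C\cup\{w\})$ for every $w\neq v$, have at least $2k^2+1$ vertices. So $X$ is the unique minimiser, yet $\{c_1,c_2\}$ separates $v$ in $G[X]$. Your refined choice of separation still gives $A=\{v\}$, $S=\{c_1,c_2\}$. (For $k=2$ the bad subcase is vacuous, so your argument does cover that case.)

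\textbf{Why your fallbacks do not work as stated.}
\begin{itemize}
\item With the potential $|X|+\lambda|\partial(X)|$, you give no way to recover $|\partial(X)|\le 2k^2$ at the end.
\item With the edge boundary, Case 1 fails: $S$ may send arbitrarily many edges into the discarded side.
\end{itemize}

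\textbf{The missing idea: cap each vertex's outside degree at $k$.} Put
\[\Phi(X)=\sum_{x\in X}\min\bigl\{d_{V(G)\setminus X}(x),\,k\bigr\},\]
so that $|\partial(X)|\le\Phi(X)$. Define $\mathcal{F}$ by $|X|>4k^2$ and $\Phi(X)\le 2k^2$, and again take $X\in\mathcal{F}$ of minimum size.
\begin{itemize}
\item Case 2 is unchanged.
\item In Case 1, each vertex of $A$ has the same outside degree with respect to $X_1$ as with respect to $X$, and $S$ contributes at most $k(k-1)$. So $\Phi(X_1)+\Phi(X_2)\le\Phi(X)+2k(k-1)$, and one side has $\Phi\le 2k^2-k$.
\item In Case 3, $|A|\le|\partial(X)|\le 2k^2$. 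So each $a\in A$ has at most $2k^2+k-2$ neighbours in $X$, hence at least $k$ outside, and contributes exactly $k$. Deleting $A$ removes $k|A|$ from $\Phi$. It raises the contribution of $S$ by at most $e(A,S)\le (k-1)|A|$, and leaves $B$ unchanged. Hence $\Phi(X_2)\le\Phi(X)-|A|$, with no comparison between $|A|$ and $|S|$ needed.
\end{itemize}
Thus the minimiser is $k$-connected, has more than $4k^2$ vertices, and satisfies $|\partial(X)|\le\Phi(X)\le 2k^2$.
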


\section{}

The next lemma that allows us to find an induced subdivision of a clique in a very unbalanced bipartite graph.

\begin{lemma}\label{lem: unbalanced}
    Let $G$ be a graph with disjoint vertex sets $A,B$ satisfying the following for some integer $d\geq 3$: 
    \begin{enumerate}[(i)]
     \item $G$ is $2d$-degenerate; 
     \item $G$ has girth at least $5$;
        \item $|A|\geq 10^5 d^{4}|B|$; 
        \item $d_{B}(x)\geq 2$ for every $x\in A$.
    \end{enumerate}

    Then, $G$ contains an induced subdivision of $K_{d+1}$.
\end{lemma}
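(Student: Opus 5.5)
The plan is to reduce to the non-induced result, Theorem~\ref{thm: subdivision}, applied to an auxiliary graph on a subset of $B$. Each vertex of $A$ is turned into a subdivided edge between two of its $B$-neighbours, after cleaning so that the resulting subdivision is automatically induced in $G$.

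\textbf{Step 1: $A$-vertices with few $B$-neighbours.} By (i), $e(G[A\cup B])\le 2d(|A|+|B|)\le 3d|A|$. Hence at least half of the vertices $a\in A$ satisfy $d_B(a)\le 12d$; call this set $A_1$.

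\textbf{Step 2: random sampling of $B$.} Sample $B'\subseteq B$ by including each vertex independently with probability $p=1/(12d)$. Let $A_2$ be the set of $a\in A_1$ with $d_{B'}(a)=2$ exactly. Then
\[
\mathbb{E}|A_2|\ge \tbinom{2}{2}p^2(1-p)^{12d}|A_1|\ge c|A|/d^2
\]
for an absolute constant $c>0$. Meanwhile $\mathbb{E}|B'|=p|B|$, and the expected number of edges inside $B'$ is at most $p^2\cdot 2d|B|=p|B|/6$. Fix an outcome in which $|A_2|$, $|B'|$ and $e(G[B'])$ are all within a constant factor of their expectations. Then delete one endpoint of each edge of $G[B']$, together with every $a\in A_2$ that had a neighbour among the deleted vertices. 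Call the resulting sets $B''$ (independent) and $A_2'$.

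This deletion is the step I expect to need the most care. A deleted $B$-vertex may have many $A_2$-neighbours, so the count of discarded $A$-vertices must be controlled. One option is to delete only vertices of $B'$ with small $A_2$-degree, using degeneracy and averaging. Another is to first discard the $B$-vertices of large $A$-degree, which are few because $\sum_{b}d_A(b)\le 3d|A|$; their loss can be absorbed by restricting attention to $A$-vertices whose two chosen neighbours avoid them. Alternatively, one can sample so that the sets $B'$ are already independent with good probability (e.g.\ via the local lemma, Lemma~\ref{lem: lll}).

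\textbf{Step 3: making $A$ independent.} Since $G[A_2']$ is $2d$-degenerate, it contains an independent set $A_3$ with $|A_3|\ge |A_2'|/(2d+1)$. Every $a\in A_3$ has exactly two neighbours in $B''$ and none elsewhere in $A_3\cup B''$.

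\textbf{Step 4: the auxiliary graph.} Let $H$ be the graph on $B''$ in which each $a\in A_3$ contributes the edge between its two $B''$-neighbours. $H$ is simple: two vertices of $A$ with the same pair of $B$-neighbours would form a $C_4$, contradicting (ii). Its average degree is
\[
\frac{2|A_3|}{|B''|}\gtrsim \frac{|A|/d^3}{|B|/d}=\frac{|A|}{d^2|B|}\ge 10^5 d^2\cdot(\text{absolute constant}),
\]
which exceeds $10d^2$ once the constants are tracked (this is where the $10^5d^4$ in (iii) is spent). By Theorem~\ref{thm: subdivision}, $H$ contains a subdivision of $K_{d+1}$. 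Replacing each $H$-edge by its path $b\,a\,b'$ gives a subdivision of $K_{d+1}$ in $G$ whose vertex set $S$ lies in $A_3\cup B''$.

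\textbf{Step 5: inducedness.} $B''$ and $A_3$ are both independent. Each $a\in S\cap A_3$ is adjacent in $G[A_3\cup B'']$ only to its two $B''$-neighbours, which are exactly its neighbours on the subdivision. So $G[S]$ has no edges beyond those of the subdivision, and the subdivision is induced.
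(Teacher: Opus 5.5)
Your overall plan matches the paper's. You sample $B$ at rate $\Theta(1/d)$, keep the $A$-vertices with exactly two sampled $B$-neighbours, and pass to an independent subset of $A$. You then build the auxiliary graph on the surviving part of $B$ and apply Theorem~\ref{thm: subdivision}. Steps 1, 3, 4 and 5 are fine.

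\textbf{The gap.} The step you flag in Step 2 is the crux of the lemma: making the sampled set $B'$ independent while keeping a constant fraction of $A_2$. None of your three options closes it.
\begin{itemize}
\item Deleting an arbitrary endpoint of each edge of $G[B']$ gives no control, since one deleted $b$ may be adjacent to a large part of $A_2$.
\item Your second option fails as stated. About $p|B|=\Theta(|B|/d)$ vertices get deleted, while $|A_2|=\Theta(|A|/d^2)$. With the bound $d_{A_2}(b)\le d_A(b)$, you would have to discard every $b$ with $d_A(b)$ above roughly $|A|/(d|B|)$. That threshold is a factor $d$ below the average $A$-degree $e(A,B)/|B|\ge 2|A|/|B|$. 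In a near-regular configuration (e.g.\ $A$ the subdivision vertices of a regular graph on $B$) this discards essentially all of $B$.
\item Your first option does not produce an independent $B''$: an edge of $G[B']$ joining two high-degree vertices survives.
\item The local-lemma option is not set up. $G[B]$ is only degenerate, so the dependencies are unbounded, and in any case it gives no lower bound on $|A_2|$ in the conditioned space.
\end{itemize}

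\textbf{A smaller issue.} ``Fix an outcome where $|A_2|$, $|B'|$, $e(G[B'])$ are all within a constant factor of their expectations'' does not follow from Markov's inequality. Reverse Markov controls the lower tail of $|A_2|$ only with probability of order $1/d^2$. Take the expectation of a suitable linear combination instead.

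\textbf{The missing idea.} Make the alteration local, and count per vertex of $A$ rather than per deleted vertex of $B$.
\begin{itemize}
\item Fix a $2d$-degenerate ordering of $B$. Let $B''$ be the set of $b\in B'$ none of whose at most $2d$ later neighbours lies in $B'$. This set is automatically independent.
\item Take $a\in A_1$ and two of its $B$-neighbours $b_1,b_2$; these are non-adjacent since $G$ has no triangles. Consider the event that $N(a)\cap B'=\{b_1,b_2\}$ and no later neighbour of $b_1$ or $b_2$ lies in $B'$.
\item This event forces $N(a)\cap B''=\{b_1,b_2\}$. It has probability at least $p^2(1-p)^{12d+4d}=\Omega(d^{-2})$, however large $d_A(b_1)$ and $d_A(b_2)$ are.
\item Apply linearity of expectation to the number of such good $a$ minus $\lambda|B''|$. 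Take $\lambda$ of order $d^2$, or $d^3$ if you make $A$ independent afterwards. This gives a single outcome with the required ratio, and your Steps 3--5 then go through unchanged.
\end{itemize}
This is exactly the paper's argument. The paper uses $p=1/(10d)$, first restricts to an independent $A''\subseteq A$ of vertices with at most $4d$ neighbours in $B$, and shows $\mathbb{E}[X-10d^2|R'|]\ge 0$.
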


\begin{proof}
    Since $G$ is $2d$-degenerate, we can find $A'\subset A$ of size at least $\frac{|A|}{2d+1}\ge \frac{|A|}{3d}$ which forms an independent set. 
    Moreover, by degeneracy there are at most $|A'|/2$ vertices which send more than $4d$ edges to $B$. Let $A''$ be the set of vertices of $A'$ which send at most $4d$ edges to $B$, by assumption $|A''|\geq |A'|/2\geq |A|/6d$. 
    
    We now randomly choose a vertex in $B$ with probability $p=\frac{1}{10d}$ independently, denote the random subset $R$. Let $z_1,\ldots z_{|B|}$ be an ordering of the vertices of $B$ such that $z_i$ has at most $2d$ neighbours to $z_j$ with  $j> i$ and let $R'$ be the subset of $R$ where every vertex in $R'$ has no right neighbour in $R'$. It follows that $G[R']$ is an independent set.
    We say a vertex $y\in A''$ is \textit{good} if it has exactly two neighbours in $R'$.

It is not hard to see that $\mathbb{P}[y \text{ is good}]\geq p^2(1-p)^{4d}(1-p)^{4d}\geq p^2\cdot (1/2) \cdot (1/2)=\frac{1}{400d^2}$ (importantly $y$ has at least two neighbors $b_1,b_2$ in $B$, and they are not adjacent by our girth assumption; we then get this lower bound for the probability $N(y)\cap R'=\{b_1,b_2\}$). Let $X$ be the number of \textit{good} vertices. 
Hence $\mathbb{E}[X-10d^2|R'|]\ge |A|/2400d^3 -10d^2p|B|\geq 0$ and so there is a choice where $X\geq 10d^2|R'|$. Fix such a choice of $R'$. We construct an auxiliary graph $H$ with vertex set $R'$ where $xy\in E(H)$ iff there is $z\in A''$ with $N_{R'}(z)=\{x,y\}$. Note that if $H'$ is a subgraph of $H$, then the $1$-subdivision of $H'$ is an induced subgraph of $G$. Thus it remains to show $H$ has $K_{d+1}$ as an induced subgraph.

Now by our girth assumption, for every pair $x,y\in R'$ there is at most one vertex $z$ with $N_{R'}(z)=\{x,y\}$ (since $G$ is $C_4$-free). Therefore, by the choice of $R'$, we know $|E(H)|\geq 10d^2|R'|$ and thus Theorem~\ref{thm: subdivision} implies we can find a subdivision of $K_{d+1}$ as subgraph of $H$. As previously discussed, this implies $G$ has a $K_{d+1}$ subdivision as induced subgraph, completing the proof. 
\end{proof}

\begin{lemma}\label{lem: largesub}
    Let $d\geq 10^8$ and $G$ be a $2d$-degenerate graph on $n$ vertices with girth at least 5 and no induced subdivision of $K_{d+1}$. Moreover, let $X\subset V(G)$ where $|X|\geq n/2$ and every vertex in $X$ has degree at least $d$.
    Then, we can find the following structure.
Two pairwise disjoint sets $X', Y$ satisfying the following:
\begin{enumerate}
    \item $X'\subseteq X$ and $Y\subseteq (V(G)\setminus X')$;
    \item $|Y|\geq \frac{n}{3d^{7}}$;
    \item  $Y$ is an independent set;
    \item $2 \leq |N(y)\cap X'|\leq d^{6}$ for every $y\in Y$;
    \item $d(x')\leq 8d$ for all $x'\in X'$.
\end{enumerate}
\end{lemma}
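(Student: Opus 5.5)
Set $X_0=\{x\in X: d(x)\le 8d\}$ and $Z=V(G)\setminus X_0$. Since $G$ is $2d$-degenerate, $e(G)\le 2dn$, so at most $n/2$ vertices have degree exceeding $8d$; more precisely $\sum_v d(v)\le 4dn$ gives at most $n/2$ vertices of degree $>8d$. Hence $|X_0|\ge |X|-n/2$, which could be tiny. We therefore use a stronger count: at most $n/4$ vertices have degree $>16d$. To keep the threshold $8d$ we argue by averaging over $X$ only. Every vertex of $X$ has degree $\ge d$, so the plan is to find many vertices of $X$ of degree $\le 8d$ together with structure. The cleanest route is to split into two cases according to $|X_0|$.

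\emph{Case 1: $|X_0|\ge n/(d^{3})$, say.} Each $x\in X_0$ has $d\le d(x)\le 8d$. Consider the neighbourhoods $N(x)$ for $x\in X_0$ and count pairs $(x,y)$ with $y\notin X_0$ adjacent to $x$, or $y\in X_0$. Take a random subset $X'\subseteq X_0$ with each vertex kept with probability $q$ (a small power of $1/d$). Then let $Y$ be the set of vertices $y\notin X'$ with $2\le |N(y)\cap X'|\le d^6$. The upper bound holds automatically for vertices of degree $\le d^6$. The high-degree vertices (degree $>d^6$) number at most $4dn/d^6$ by degeneracy, and we simply discard them. Vertices of moderate degree that receive at least two edges from $X_0$ become candidates. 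Since $G$ has girth $\ge 5$, the neighbourhoods of $x\in X_0$ are large ($\ge d$) and nearly disjoint. By counting the $\ge d|X_0|$ edges leaving $X_0$, either many vertices outside receive at least two edges from $X_0$, or most vertices receive exactly one. In the latter case we iterate one level further out using girth, or find a large set $A$ with two neighbours in a small $B$ and apply Lemma~\ref{lem: unbalanced} for a contradiction. Finally we pass to an independent subset of $Y$ via $2d$-degeneracy, losing a factor $2d+1$.

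\emph{Case 2: $|X_0|$ small.} Then most of $X$, hence at least $n/2-o(n)$ vertices, have degree $>8d$. But the total degree is at most $4dn$, which forces $\ge (n/2)\cdot 8d=4dn$, contradicting the bound up to lower-order terms. So a bound like $|X_0|\ge |X|-n/2\ge0$ is tight only when $|X|\approx n/2$, and the slack must be handled carefully. This is the main obstacle: the constants $8d$ and $n/2$ are borderline. I would handle it by noting that the edges from $X$ into vertices of degree $\ge 8d$ can be controlled by degeneracy orientation. Orient each edge toward its later vertex in the degeneracy order, so that every vertex has out-degree $\le 2d$. A vertex $x\in X$ with $d(x)\ge d$ then has in-degree $\ge d-2d$, which is vacuous, so instead I would pick $X'$ from $X_0$ and pass high-degree vertices of $X$ into the pool of potential $Y$.

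In the remaining configuration, many vertices $Z$ of degree $>8d$ each send at least $6d$ in-edges from vertices outside. Taking $A$ to be low-degree neighbours with two neighbours in a small set $B$ produces the unbalanced structure of Lemma~\ref{lem: unbalanced}, with $|A|\ge 10^5d^4|B|$, contradicting the absence of an induced $K_{d+1}$-subdivision.

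The key quantitative step I expect to be hardest is showing that, failing Lemma~\ref{lem: unbalanced}, a $1/d^{O(1)}$ fraction of all vertices have between $2$ and $d^6$ neighbours in $X'$. This yields $|Y|\ge n/(3d^7)$ after the independence step. I would organize it as a dichotomy: either $|Y|$ is large, or the set of vertices with $\ge 2$ neighbours in $X'$ is dominated by vertices with $>d^6$ neighbours in $X'$. In the second case, the latter form a set $B$ of size $\le 8d|X'|/d^6$, while their neighbours in $X'$ give $A$ with two neighbours in $B$. This fits Lemma~\ref{lem: unbalanced} with room to spare since $d^5\gg 10^5d^4$.
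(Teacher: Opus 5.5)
There is a genuine gap. Your skeleton is the paper's: low-degree vertices of $X$, a random $X'$, candidates with between $2$ and $d^6$ neighbours in $X'$, Lemma~\ref{lem: unbalanced} against the set $B$ of vertices with more than $d^6$ neighbours in $X'$, and then an independent subset via degeneracy. What is missing is the count that makes the dichotomy work, which is three-way:
\begin{enumerate}[(a)]
\item $e(X',V(G)\setminus X')=\Omega(dn)$;
\item vertices with exactly one neighbour in $X'$ absorb at most $n$ of these edges;
\item if fewer than $n/d^6$ vertices have between $2$ and $d^6$ neighbours in $X'$, they absorb at most $n$ more.
\end{enumerate}
So $\Omega(dn)$ edges enter $B$, and $|B|<2dn/d^6$. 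Since vertices of $X'$ have degree at most $8d$, the set $A$ of vertices of $X'$ with two neighbours in $B$ has size $\Omega(n)$, contradicting Lemma~\ref{lem: unbalanced}.

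Your final dichotomy skips step (b). Your claim that the neighbours of $B$ in $X'$ have two neighbours in $B$ needs $e(X',B)$ to be a large multiple of $|X'|$. That in turn needs the edges landing on single-neighbour vertices to be negligible, which only holds when $d|X'|$ is a large multiple of $n$.

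In your Case~1 you only know $|X_0|\ge n/d^3$. Then $e(X_0,V(G))\le 8d|X_0|$ may be far below $n$, and nothing prevents every edge leaving $X_0$ from ending at a vertex with one neighbour in $X_0$. Sparsifying further with $q$ a power of $1/d$ only makes this worse. ``Iterate one level further out using girth'' has no content here, since $X'$ must stay inside $X_0$.

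Your Case~2 contains no proof. You concede the degree count gives no contradiction and drop the orientation idea as vacuous. The final configuration ($6d$ in-edges, a small $B$) is then asserted rather than derived.

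The missing observation is that $X_0$ is automatically of linear size. Your estimate $|X_0|\ge |X|-n/2$ is lossy because it charges edges inside the high-degree set twice to $e(G)$. Let $H$ be the set of vertices of degree more than $8d$. By $2d$-degeneracy,
\[8d|H|<\sum_{x\in H}d(x)=e(G[H])+\bigl(e(G[H])+e(H,V(G)\setminus H)\bigr)\le 2d|H|+2dn,\]
so $|H|<n/3$ and $|X_0|>n/6$. In particular, Case~2 never occurs.

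Now let $X'$ be a uniformly random half of $X_0$. The randomness only serves to separate $X'$ from the candidate vertices while keeping a constant fraction of the edges. This gives $\mathbb{E}[e(X',V(G)\setminus X')]\ge d|X_0|/4> dn/24$, and the three-way count goes through with the stated constants: one gets $|A|\ge n/200$ and $|B|<2n/d^5$, so $|A|/|B|\ge 10^5d^4$ when $d\ge 10^8$.

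This is precisely the paper's argument. Note that the paper itself just asserts a low-degree subset of size $n/4$ at this point; the count above is what actually justifies a linear bound.
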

\begin{proof}
    Firstly, by $2d$-degeneracy we may pass to a subset of $Z\subseteq X$ of size $n/4$ such that every vertex had degree at most $8d$. We uniformly at random split $Z$ into two parts $Z_1\cup Z_2$. 
    Note that the expected number of edges between $Z_1$ and $V(G)\setminus Z_1$ is at least $(n/4)\cdot d /2\geq nd/8$. 
    Fix such a partition.

    Let $U\subset V(G)\setminus Z_1 $ be the collection of vertices which send at least $2$ edges to $Z_1$. 
    It is clear that there are at most $n$ edges from $V(G)\setminus U$ to $Z_1$ since each such vertex sends at most one edge to $Z_1$. 
    Therefore, $e(Z_1,U)\geq nd/8 - n\geq nd/10$. 
    Finally, let $U'\subseteq U$ be the set of vertices which send more than $d^{6}$ edges to $Z_1$. By $2d$-degeneracy, we have that $|U'|\leq 2n/d^{5}$.

    Suppose for contradiction that $|U\setminus U'|\leq n/d^{6}$. Since the total number of edges from $U\setminus U'$ to $Z_1$ is at most $(n/d^6) \cdot d^6\leq n$ we must have at least $nd/10 -n \geq nd/12$ edges in $(Z_1, U\setminus U')$. Delete all vertices from $Z_1$ which send less than $2$ edges to $U\setminus U'$ and let $Z'_1$ be the remaining vertices. Note that $e(Z'_1, U\setminus U')\geq nd/12 - n\geq nd/14$. By assumption on the maximum degree of the vertices in $Z'_1$ we must have that $|Z'_1|8d\geq nd/14$ which implies $|Z'_1|\geq n/ 200$. We may now invoke Lemma~\ref{lem: unbalanced} with $Z'_1=:A$ and $U'=:B$ (note that $\frac{|A|}{|B|}\ge \frac{d^5}{400}\ge 10^5 d^4$ as we assumed $d\ge 10^8$), contradicting the assumption. 
    
    Therefore we can assume $|U\setminus U'|\geq n/d^6$, whence we are done by taking $X':=Z_1$ and $Y\subset U\setminus U'$ independent of size at least $n/3d^7$ (using that $\chi(G)\le 2d+1\le 3d$ by $2d$-degeneracy).
\end{proof}

\begin{lemma}\label{lem: connectedgood}
    Let $d\geq 5000, C
    \geq 1$ and $G$ be a graph on $n$ vertices with $\delta(G)\geq d^6$ and $\Delta(G)\leq d^{C}$,  Moreover, let $B\subset V(G)$ be a subset of size at least $\frac{n}{d^{C}}$. Then, provided $g(G)\geq 20C$, there exists a $100d^2$-connected subgraph $H'\subset G$ containing at least $\frac{|V(H')|}{2d^C}$ vertices $x \in B$ with such that $d_G(x)= d_{H'}(x)$. 
\end{lemma}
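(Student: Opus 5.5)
The plan is to combine Lemma~\ref{lem: boundary} with $k:=100d^2$ and an averaging argument over a family of disjoint highly connected pieces that together cover almost all of $V(G)$. First, the hypothesis of Lemma~\ref{lem: boundary} holds: $4k^2=4\cdot 10^4 d^4\le d^6/2$ since $d\ge 5000$, and this leaves room to lose half the minimum degree later. Second, I will use the girth to make every such piece huge compared to its boundary. If $X$ induces a $k$-connected subgraph, then $G[X]$ has minimum degree at least $k\ge 3$ and girth at least $20C$. So Lemma~\ref{lem : girth} gives $|X|\ge k^{10C}\ge d^{20C}$. Since $|\partial X|\le 2k^2=2\cdot 10^4d^4$, the boundary is a negligible fraction (at most $d^{-10C}$, say) of $X$.

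The core step is an iterative extraction. Set $G_0:=G$ and maintain a set $D$ of ``discarded'' vertices, initially empty. At step $i$, let $G_i$ be the subgraph obtained from $G$ by deleting $X_1\cup\dots\cup X_{i-1}$ and $D$. Then repeatedly delete from $G_i$ (adding them to $D$) the vertices whose degree has dropped below $d^6/2$, until $\delta(G_i)\ge d^6/2\ge 4k^2$. If $G_i$ is nonempty, apply Lemma~\ref{lem: boundary} to $G_i$ to get a $k$-connected $X_i$ whose boundary in $G_i$ has at most $2k^2$ vertices. Any vertex of $X_i\setminus\partial_{G_i}(X_i)$ that has no $G$-neighbour in the previously deleted set $X_1\cup\dots\cup X_{i-1}\cup D$ has all its $G$-neighbours inside $X_i$. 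For such a vertex, $d_G(x)=d_{G[X_i]}(x)$; call these vertices \emph{good}. The process ends when $G_i$ is empty.

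The main obstacle is bounding the number of vertices that are \emph{not} good, namely the discarded vertices, the boundary vertices, and the vertices adjacent to earlier deleted material. The goal is at most $n/(2d^C)$ of them, so that at least $|B|-n/(2d^C)\ge n/(2d^C)$ vertices of $B$ are good. Vertices adjacent to an earlier $X_j$ or to $D$ but lying outside it must be reached through edges leaving $X_j$, and such edges emanate only from $\partial_{G_j}(X_j)$ or from vertices of $D$. So I will bound everything via $\Delta(G)\le d^C$ and a charging argument. A vertex enters $D$ only after losing $d^6/2$ of its edges to deleted material. Each boundary vertex and each $D$-vertex has at most $d^C$ edges, so a standard potential argument bounds $|D|$ by $O(d^{C-6})$ times the total boundary size. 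The total boundary size is $\sum_i|\partial X_i|\le 2k^2\cdot\#\{i\}\le 2k^2 n/d^{20C}$, using $|X_i|\ge d^{20C}$ and disjointness. Thus the bad vertices number at most roughly $d^{2C+4}\cdot n/d^{20C}\ll n/(2d^C)$. The girth bound $20C$ is chosen precisely to beat these $d^{O(C)}$ losses. Making this charging rigorous, including the cascade of deletions and the fact that boundaries are measured in $G_i$ rather than $G$, is where I expect the real work.

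Finally, I will average. The $X_i$ partition all non-discarded vertices, so $\sum_i|X_i|\le n$ while $\sum_i|\{\text{good }x\in B\cap X_i\}|\ge n/(2d^C)$. Hence some $i$ satisfies $|\{\text{good }x\in B\cap X_i\}|\ge |X_i|/(2d^C)$. Taking $H':=G[X_i]$ (which is $k=100d^2$-connected) gives the lemma.
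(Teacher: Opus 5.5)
Your proposal follows the paper's proof essentially step for step. The paper likewise repeatedly extracts $100d^2$-connected pieces via Lemma~\ref{lem: boundary} and cleans up low-degree vertices after each extraction. It uses the girth bound to make every piece so large that there are few pieces and hence few boundary vertices, charges the discarded vertices to the boundaries via $\Delta(G)\le d^C$, and finishes with an averaging step.

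One correction concerns the step you deferred. With clean-up threshold exactly $d^6/2$, which is half the minimum degree, the charging does not give $|D|=O(d^{C-6})\sum_i|\partial X_i|$. Let $\theta<d^6/2$ bound the number of edges a discarded vertex has to vertices deleted after it. The double count $d^6|D|\le\sum_{v\in D}d_G(v)\le 2\theta|D|+\Delta(G)\sum_i|\partial X_i|$ is then vacuous except for rounding. So you only get $|D|\le\Delta(G)\sum_i|\partial X_i|$. Equivalently, the number of edges between deleted and surviving vertices rises by at most $\Delta(G)|\partial X_i|$ per extracted piece and falls by at least one per discarded vertex.

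This weaker bound is exactly what your final estimate $d^{2C+4}n/d^{20C}$ uses, so the argument goes through. The paper avoids the borderline case by discarding vertices of degree below $d^5$, which gives $d^6|D|\le 2d^5|D|+\Delta|S|$ directly.

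Your weighted averaging compares $\sum_i|X_i|\le n$ against at least $n/(2d^C)$ good vertices of $B$. This proves the bound $|V(H')|/(2d^C)$ exactly as stated. The paper instead pigeonholes over the at most $n/d^{5C}$ pieces and gets $(1/2)d^{4C}$ good vertices, which is the form used in Corollary~\ref{cor: structure}. Your version recovers that as well, since each piece has at least $d^{20C}$ vertices.
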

\begin{proof}
    Let $G_1 =G$. Given $t\ge 1$, if $G_t$ is a non-empty graph with minimum degree $d^5$, then by Lemma~\ref{lem: boundary} we can find an induced subgraph $H_t\subset G_t$ so that $H_t$ satisfies:
 \begin{itemize}
     \item $100d^2$-connected (with at least $3$ vertices\footnote{This is simply to sidestep conventions regarding the connectivity/girth of cliques on at most $2$ vertices.});
     \item has boundary $S_t := \partial_{G_t}(H_t)$ of size at most $400 d^4$.
 \end{itemize}
We fix one such $H_t$, delete $V(H_{t})$ and additionally delete vertices with degree less than $d^5$ in the current graph until no more such vertices remain. Call this set of additional deleted vertices $D_t$, and update $G_{t+1}:= G_t\setminus (V(H_t)\cup D_t)$. We repeat this process until we no longer can, ultimately yielding a maximal collection of subgraphs $H_1,\dots,H_\tau$. Noting $|H_i|\ge d^{5C}$ holds for all $i$ by our connectivity and girth assumptions, we get $\tau \le n/d^{5C}$. 
 For $i=0,\dots,\tau$, set $S^{i} := \bigcup_{t=1}^i S_t$ and $D^{ i}\coloneqq \bigcup_{t=1}^i D_t$.
If for some $i\leq \tau$, $H_i$ contains $500d^4$ vertices in $B$ with $d_{H_t}(x)=d_G(x)$ then we obtain the required subgraph. 

Write $\Delta:=\Delta(G)\le d^C$. We claim that for each $i=0,1,\dots,\tau$ that $|D^i|\leq \Delta|S^i|$. Indeed, by construction, the number of edges touching $D^i$ but not $S^i$ is at most $d^5|D^i|$, whence
\[d^6|D^i| \le \sum_{x\in D^i} d(x)\le 2d^5|D^i|+ \sum_{y\in S^i} d(y)\le (d^6-1)|D^i|+\Delta|S^i|.\]Consequently, noting $|S^\tau|\le 400 d^4 \tau\le n/d^{4C}$ we get that $|S^\tau \cup D^\tau|\le 2n/d^{3C}\le |B|/4d^{C}$ (since $|B|\ge n/d^C$ and $d^C\ge 8$). Thus in particular, writing $W$ to be the vertices with distance at most $1$ to $S^\tau \cup D^\tau$, we have $|W|\le 2\Delta|S^\tau\cup D^\tau|\le |B|/2$.

Now, since our sequence of graphs $H_1,\dots,H_\tau$ is maximal, we must have that $G_{\tau+1}$ is the empty graph. Thus, $V(G)\setminus (S^\tau\cup D^\tau)$ is partitioned into the vertex sets of $H_1,\dots,H_\tau$. Next let $B':= B\setminus W$, and note $|B'|\ge (1/2)|B|$ by the previous paragraph. Then by pigeonhole we can find some $t\in [\tau]$ with $|B'\cap V(H_t)|\ge \tau^{-1}|B'|\ge (1/2)d^{4C}$. We observe that $d_{H_t}(v)=d_G(v)$ for all $v\in B'$ (since $B'\cap W=\emptyset$, and considering the definition of $W$), whence taking $H':=H_t$ has the desired properties.
\end{proof}

\section{Main proofs}

\subsection{Key proposition}

We turn now to the main technical result of the paper, which allow us to find an induced subdivision provided the maximum degree is polynomially bounded (even with a weaker minimum degree assumption). 
\begin{lemma}\label{lem: maxdegree}
    Let $d\geq 10^7$ and $G$ be a graph on $n$ vertices and $U\subseteq V(G)$ where $|U|\geq n/20$ such that $d(x)\geq d$ for every $x\in U$.
    Suppose that $\delta(G)\geq d/5$ and $\Delta(G)\leq d^{35}$. Finally, assume that $girth(G)\geq 10^8$. Then, $G$ contains an induced subdivision of $K_{d+1}$. 
\end{lemma}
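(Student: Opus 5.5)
We argue by contradiction: assume $G$ has no induced subdivision of $K_{d+1}$. The plan is to find $d+1$ high-degree ``branch'' vertices whose neighbourhoods are far apart, inside a highly connected piece of $G$. We then route internally disjoint paths between them via Lemma~\ref{lem: linkedness}, and finally make the union induced using the large girth.

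\textbf{Step 1: a highly connected piece.} We want to apply Lemma~\ref{lem: connectedgood} with $B=U$ (so $C=35$) to obtain a $100d^2$-connected subgraph $H'$. That lemma requires $\delta\ge d^6$, which we do not have, so we first pass to a blown-up auxiliary graph. Let $G^{(r)}$ be the graph on $V(G)$ in which two vertices are joined when their distance in $G$ is exactly $r$, for a small constant $r$ (say $r=4$). Since the girth is at least $10^8$, the radius-$r$ balls are trees. Hence $\delta(G^{(r)})\ge (d/5-1)^{r-1}\ge d^{6}$ for $r=8$, say. We also have $\Delta(G^{(r)})\le d^{35r}$ and $g(G^{(r)})\ge 10^8/r$, which is far above $20C$ with $C=35r$. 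This gives a $100d^2$-connected $H'\subseteq G^{(r)}$ containing at least $|V(H')|/2d^C$ vertices $x\in U$ whose whole $G^{(r)}$-neighbourhood lies in $H'$.

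\textbf{Step 2: choosing branch vertices and linking.} Among these ``interior'' $U$-vertices we greedily pick $b_0,\dots,b_d$ pairwise at $G$-distance at least, say, $10^6$. This is possible because there are many of them while each ball of bounded radius has only polynomially many vertices (Lemma~\ref{lem : girth} gives $|V(H')|$ huge). For each $b_i$ we fix $d$ of its neighbours and, through the tree structure, distinct endpoints at distance $r$. This yields $\binom{d+1}{2}\cdot 2\le d^2$ terminal pairs $(x_{ij},y_{ij})$ with $x_{ij}$ near $b_i$ and $y_{ij}$ near $b_j$. Since $H'$ is $100d^2$-connected, Lemma~\ref{lem: linkedness} links all pairs by vertex-disjoint paths in $H'$, avoiding the balls around the branch vertices; we delete these balls first and then use the slack in the connectivity. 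Each $H'$-edge then expands to a $G$-path of length $r$, giving a (non-induced) subdivision in $G$.

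\textbf{Step 3 (main obstacle): making it induced.} The expanded paths may intersect or have chords between one another. To fix this, I would choose the linkage minimising total length and then apply shortcutting. Whenever two paths come within distance $1$ in $G$, we reroute along the chord. The danger is that rerouting destroys the pairing. The girth bound of $10^8$ is what should make this work. Any chord between two paths, together with the paths, creates a cycle through branch-vertex trees. Local conflicts therefore only occur along long stretches, and one can uncross them by swapping path tails. This changes the linking pattern only by a permutation, and we absorb it by making the terminals near each $b_i$ interchangeable: any neighbour of $b_i$ in its tree may serve. I expect this uncrossing/inducedness step to be the bulk of the work.
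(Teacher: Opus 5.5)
\textbf{There is a genuine gap, in two places: Step~1 rests on a false claim, and Step~3 is not yet an argument.} Step~3 is also exactly where the whole difficulty of the lemma lies.

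\emph{Step 1.} $G^{(r)}$ does not have large girth. Let $p_0p_1\dots p_r$ be a path in $G$, let $a'\neq p_0,p_2$ be another neighbour of $p_1$, and let $b'\neq p_r,p_{r-2}$ be another neighbour of $p_{r-1}$. Since radius-$r$ balls are trees, $p_0,p_r,a',b'$ span a $C_4$ in $G^{(r)}$. For even $r$ you even get large cliques, from vertices at distance $r/2$ from a common vertex in different branches. So the girth hypothesis of Lemma~\ref{lem: connectedgood} fails and the lemma cannot be applied as you state.

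\emph{End of Step 2.} Vertex-disjoint edges of $G^{(r)}$ correspond to $G$-geodesics that can share internal vertices or be joined by edges of $G$. Hence the linkage you obtain in $H'$ does not even give a non-induced subdivision in $G$, contrary to the last sentence of Step~2.

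\emph{Step 3.} The proposed repair does not work.
A minimum-length linkage can still have chords between two different paths, since such a chord shortens neither path. Rerouting along a chord, or swapping the tails of $Q_{ij}$ and $Q_{kl}$, produces paths $b_i\to b_l$ and $b_k\to b_j$. That changes which pairs of branch vertices are joined, and it cannot be absorbed by permuting terminals at a single $b_i$. Nothing guarantees the process terminates without creating new chords either.

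\textbf{The missing idea is to build the auxiliary graph so that inducedness comes for free.} This is what the paper does.
\begin{enumerate}
\item Partition $V(G)$ into vertex-disjoint induced trees $B(x)$ of radius between $50$ and $101$. Their centres form a maximal $101$-separated set containing a maximal such subset $U'\subseteq U$.
\item Join centres $x,y$ in $H^*$ when some path $P_{xy}$ of length at most $203$ lies in $B(x)\cup B(y)$.
\item Keep each centre independently with probability $p=d^{-36}$. Keep the edge $xy$ only if $P_{xy}$ sends no edge to $B(w)$ for any other kept centre $w$.
\end{enumerate}
Then vertex-disjoint edges of $H$ carry vertex-disjoint, anticomplete paths in $G$.

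The local lemma gives $\delta(H)\ge d^6$ and makes every kept vertex of $U'$ \emph{branchable}. Its input is that each event has probability at most $e^{-d}$: for each neighbour $z$ of $x$, the leaves of the radius-$50$ ball $B^{(0)}(x)$ give $d^{45}$ edges of $H^*$ through $z$, which are independent given $x\in S$.

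Because the clusters are disjoint connected trees, cycles in $H$ lift to cycles in $G$. So $H$ inherits large girth, and Lemma~\ref{lem: connectedgood} legitimately applies.

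Finally, link in the resulting $100d^2$-connected subgraph via Lemma~\ref{lem: linkedness}. Then take a shortest $v_i$--$v_j$ path inside the union of the relevant $P_e$'s. The result is automatically an induced $K_{d+1}$-subdivision, with no uncrossing needed.
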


\begin{proof}
We proceed by finding a somewhat delicate structure. So let us first describe the structure and show why it suffices.

Specifically, we will find some set of vertices $S\subset V(G)$, and define an auxiliary graph $H$ on these vertices (i.e., $V(H)=S$). For each $e\in E(H)$, we shall associate some induced path $P_e$ of length at most $300$, in such a way that for disjoint $e,e'\in E(H)$, we have that $G[V(P_e)\cup V(P_{e'})]$ is isomorphic to the union of the two paths (i.e., the paths will be vertex disjoint, and send no edges between). 

In this graph $H$, we say a vertex $v$ is ``branchable'' if we can find $d$ neighbors $u_1,\dots,u_d\in N_H(v)$ so that the paths $G[V(P_{u_1v})\cup \dots \cup V(P_{u_dv})\setminus\{v\}]$ is isomorphic to the union of $d$ paths (meaning the induced subgraph including $v$ must be a subdivision of a star, whose leaves are $u_1,\dots,u_d$.   
\begin{claim}\label{clm: structure}
    Suppose we find such a structure $S,H,(P_e)_{e\in E(H)}$, so that $H$ is $11d^2$-connected, and contains at least $d+1$ branchable vertices $v_1,\dots,v_{d+1}$, $v_i$ and $v_j$ having at least three for all $i\neq j$. Then $G$ contains an induced subdivision of $K_{d+1}$.
\end{claim}
\begin{proof}
    For $i=1,\dots,d+1$, fix $u_1^{(i)},\dots,u^{(i)}_{d}$ witnessing that $v_i$ is branchable. Note that the vertices $v_1,\dots,v_{d+1},u_1^{(1)},\dots,u_d^{(d+1)}$ are all distinct, as otherwise we'd have two vertices $v_i,v_j$ with distance at most $2$. Now, after removing the vertices $v_1,\dots,v_{d+1}$ from $H$, the remaining graph is still $10d^2$-connected, and thus $d^2$-linked by Lemma~\ref{lem: linkedness}.
    
    Thus, we can find pairwise vertex-disjoint paths $(Q_{ij})_{ij\in \binom{[d+1]}{2}}$ inside $H\setminus\{v_1,\dots,v_{d+1}\}$, so that for $i<j$, $Q_{ij}$ has end-points $u^{(i)}_{j-1}$ and $u^{(j)}_i$. Now for each $ij$, writing $V_{ij}:= V(P_{v_iu^{(i)}_{j-1}}) \cup \bigcup_{e\in E(Q_{ij})}V(P_e) \cup V(P_{v_ju_i^{(j)}})$, this induces a connected graph in $G$, thus we can consider the shortest path $P_{ij}'$ from $v_i$ to $v_j$ inside $G[V_{ij}]$. We claim $G[\bigcup_{ij} V(P_{ij}')]$ induces a $K_{d+1}$-subdivision; clearly it suffices to check we contain no edges besides those in $\bigcup_{ij} E(P_{ij}')$.
    
    Suppose there was some undesired edge $xy$ in our induced subgraph. Then we must have $e_x,e_y\in \binom{[d+1]}{2}$ so that $x\in V(P_{e_x}'),y\in V(P_{e_y}')$. Note that each $P_{ij}'$ is an induced path inside $G$ (otherwise we could replace it with a shorter path using a subset of the vertices), thus we may suppose $e_x\neq e_y$. Next pick edges $f_x,f_y\in E(H)$ so that $x\in V(P_{f_x}),y\in V(P_{f_y})$. If $f_x$ and $f_y$ are vertex-disjoint, then by the assumptions of our structure $xy\not\in E(G)$, contradiction. Thus, we may assume $f_x=v_iu_{j_1}^{(i)},f_y=v_iu_{j_2}^{(i)}$ for some $i,j_1,j_2$. But then $xy$ cannot be an unwanted edge, since $v_i$ is branchable. Thus we conclude the subdivision is indeed induced, as desired.
\end{proof}
A convenient consequence of this is the following.
\begin{corollary}\label{cor: structure}
    Suppose we find such a structure $S,H,(P_e)_{e\in E(H)}$, so that $H$ has minimum degree $d^{6}$, and contains $|S|/d^{5000}$ branchable vertices. Then $G$ contains an induced $K_{d+1}$-subdivision.
\end{corollary}
\begin{proof}
    Recall that $G$ has maximum degree $d^{35}$, girth $10^8$, and the paths $P_e$ all have length at most $300$. Thus writing $C:=7000$, we have $\Delta(H)\le d^C$ and $H$ has girth at least $(1/2)10^6\ge 5\cdot C$. 
    
    Thus, writing $B\subset V(H)$ to denote the branchable vertices in $S$, we can apply Lemma~\ref{lem: connectedgood} to find a $100d^2$-connected subgraph $H'\subset H$, with at least $(1/2)d^{4C}$ vertices $b\in B\cap V(H')$ with $d_{H'}(b)=d_H(b)$ (implying they are still branchable in the structure $V(H'),H',(P_e)_{e\in E(H')}$). Since we have maximum degree $d^C$, we can greedily find $d+1$ such vertices which pairwise have distance at least $2$ from one another. Whence, $G$ will be forced to contain an induced $K_{d+1}$-subdivision by Claim~\ref{clm: structure}
\end{proof}

It remains to find the structure asked for in Corollary~\ref{cor: structure}.

Take a maximal set of vertices $U'\subseteq U$ which have pairwise distance more than $101$. Observe that $|P_1|\geq \frac{|P|}{7\Delta(G)^{101}}\geq \frac{n}{d^{3600}} $. We now take $W\subset V(G)$ maximal so that all vertices in $U'\cup W$ have pairwise distance at more than 101.

For each $x\in U'\cup W$, let $B^{(0)}(x)$ be the ball of radius $50$ with center $x$.
Observe that by construction all these balls consist of pairwise vertex-disjoint induced trees. Next, it is easy to see we may enlarge these balls to sets $B(x)\supset B^{(0)}(x)$, for $x\in U' \cup W$ so that every vertex of $G$ is in exactly one such ball and each such set $B(x)$ induces a tree where all vertices are at distance at most $101$ from $x$.

Let us create an auxiliary graph $H^*$ on $S^*:= U'\cup W$, where two vertices $x,y$ are adjacent if there is a path of $P_{xy}$ of length at most $203$ with end-points $x,y$, so that $V(P_{xy})\subset B(x)\cup B(y)$. Now, let us also create an auxiliary graph $F$ between $E(H^*)$ and $S^*$, where $e=uv\in E(H)$ is adjacent to $w$ if there is an edge between $V(P_{uv})$ and $B(w)$. 

We will now consider taking a random $S\subset S^*$, where each $x$ is kept with probability $p=d^{-36}$ independently. We will then take $H\subset H^*[S]$ to be the subgraph where an edge $e=xy$ is kept if $N_F(e)\cap S =\{x,y\}$ (meaning that if $e=xy,e'=x'y'\in E(H)$ are vertex-disjoint, there are no edges between $V(P_e)$ and $B(x')\cup B(y')\supset V(P_{e'})$). Thus, $S,H,(P_e)_{e\in E(H)}$ is the type of structure we wish to find, and it remains to show there is an outcome where $\delta(H)\ge d^6$ and we have at least $n/d^{5000}\ge |S|/d^{5000}$ branchable vertices. Indeed, upon establishing this, we get our induced subdivision by Corollary~\ref{cor: structure}. 

We will achieve this via the local lemma. 
\begin{claim}\label{clm: rare event}
    Fix $x\in S^*$ and $z\in N_G(x)$. Let $\mathcal{E}_{xz}$ be the event that $x\in S$, and furthermore that there are fewer than $d^6$ paths $P_{xy}$ with $z\in V(P_{xy})$ so that $xy\in E(H)$.
    
    Then we have $\mathbb{P}(\mathcal{E}_{xz})\le \exp(-d)$.
\end{claim}
Before establishing this final claim, let us explain why we are done. For each $x\in S^*$ and $z\in N_G(x)$, we consider the event $\mathcal{E}_{xz}$ as defined above. Each such $\mathcal{E}_{xz}$ depends on whether at most $\Delta(H)\cdot 300\Delta(G)+1\le d^{7500}$ vertices are included in $S$ or not. And each vertex $x\in S^*$ influences at most $d^{7500}$ events. So, the dependence graph has maximum degree at most $d^{15000}\lll \exp(d)$, and thus $H$ will satisfy all these events with probability at least $(1-e\exp(-d))^{n\Delta(G)}\ge \exp(-nd^{36}\exp(-d))\ge \exp(-n\exp(-d/2))$ (by Lemma~\ref{lem: lll}). Furthermore, by a Chernoff bound, we have that $|U'\cap S|\ge (p/2)|U'|$ happens with probability at least $1-\exp(-p|U'|/8)\ge 1-\exp(-nd^{-50})>1-\exp(-n\exp(-d/2))$. Thus, we can fix an outcome $H$ where we have both properties.Whence $\delta(H)\ge d^6$ clearly holds. Moreover, we noting each vertex in $U'\cap S$ is branchable\footnote{We include a brief explanation, for the convenience of the reader (though we think it is more informative to your own proof of this claim). 

Since $x\in U'$, it has $d$ neighbors $z_1,\dots,z_d$. By assumption, none of the events $\mathcal{E}_{xz_i}$ held, thus we can find $e_1,\dots,e_d\in E(H)$ so that $z_i\in V(P_{e_i})$. Since $x$ is an endpoint of all these paths, and we have girth $>500$, it must follow that these paths witness that $x$ is indeed branchable (since any two $z_i,z_j$ have distance at least $500$ in $G\setminus\{x\}$).}, which will establish we have at least $(p/2)|U'|\ge n/d^{5000}$ vertices; meaning $H$ has the needed additional properties to apply Corollary~\ref{cor: structure} and complete our proof.

Without further ado, let us show that these events $\mathcal{E}_{xz}$ are easy to avoid.
\begin{proof}{Proof of Claim~\ref{clm: rare event}}
    By assumption, some $x\in S^*,z\in N_G(x)$ is fixed. We will show that whenever $x\in S$, that it is very likely to keep many edges in $H$ corresponding to paths passing through $z$.

    To this end, we will show there exists $k=d^{45}$ edges $e_1,\dots,e_k$ so that $z\in V(P_{e_i})$ for all $i\in [k]$, and $N_F(e_i)\cap N_F(e_j) =\{x\}$ for any $j\neq i$. Whence, it follows that, after conditioning on $x\in S$, the events $\{e_i\in E(H)\}$ are independent.

    Noting that $\mathbb{P}(e_i\in E(H)|x\in S) \ge p(1-p)^{300 \Delta(G)}\ge p/2$ (recall $p=d^{-36}<\frac{1/2}{300\Delta(G)})
    $, we get that $\mathbb{P}(\mathcal{E}_{xz})\le \mathbb{P}(\operatorname{Bin}(k,p/2)<d^6)$. But noting $k(p/2)\ge 2 d^6$, a Chernoff bound then implies $\mathbb{P}(\mathcal{E}_{xz})\le \exp(-k(p/16))\le \exp(-d^6/8)\le \exp(-d)$, as desired.

    So it remains to find these edges $e_1,\dots,e_k\in E(H^*)$. We observe that given two paths $P_e,P_{e'}$ sharing $x$ as an end-point, that if $N_F(e)\cap N_F(e')$ contains some vertex $w\neq x$, there there must be some $v\in V(P_e)\cap V(P_{e'})$ so that $N_G(v)\cap B(w)$ is non-empty. Indeed, if this is not the case, there are two distinct vertices with distance less than $300$ to both $x$ and $w$, which will contradict our girth assumption. 

    Now, we consider the set $L_z$ of leaves in $G[B_0(x)]$, whose path to $x$ passes through $z$. We claim that for each $\ell\in L_z$, that there is some $y_\ell\in N_{H^*}(x)$ so that the path $P_{xy_\ell}$ passes through $\ell$ (and thus also $z$). Upon confirming this, we will be done. Indeed, given $\ell,\ell'$, we'd have $V(P_{xy_\ell})\cap V(P_{xy_{\ell'}})$ would only contain vertices with distance at most $49$ to $x$ (by girth). Thus the neighborhoods of those vertices would be contained in $B_0(x)\subset B(x)$ (meaning $N_F(xy_\ell)\cap N_F(xy_{\ell'})=\{x\} $ as desired). Meanwhile, since $G$ had minimum degree $d/5$, one gets $|L_z|\ge (d/5-1)^{49}\ge k$, so we get enough edges for the claim to follow.

    To see that each $\ell$ has a $y_\ell$, we recall that $G[B(x)]$ is a tree with radius at most $101$. Thus, there is some leaf $l_\ell$ in that tree, whose path to $x$ contains $\ell$. Then taking any neighbor of $l_\ell$ not lying in $B(x)$, said neighbor must lie in $B(y_\ell)$ for some $y_\ell\in S^*$. Thus taking the path from $x$ to $l_\ell$ to $y_\ell$ has length at most $203\le 300$, and is contained in $B(x)\cup B(y_\ell)$ (implying $xy_\ell\in E(H^*)$ as required). 
\end{proof}
\end{proof}

\subsection{Proof of theorem}
We are now ready to prove our main Theorem~\ref{thm: main}

\begin{proof}[Proof of Theorem~\ref{thm: main}]
Let $G$ be a graph on $n$ vertices with girth at least $10^5$ and minimum degree at least $d\geq 10^8$. By taking $G'\subseteq G $ with highest minimum degree, we may and will assume $G$ is $(d+1)$-degenerate. Letting $B\subset V(G)$ be the set of vertices of degree at least $d^{35}$, we note $|B|\leq \frac{100n}{d^{34}}$ (due to degeneracy).

Let $A\subseteq V(G)\setminus B$ be the set of vertices which send at least two edges to $B$. By Lemma~\ref{lem: unbalanced}, we must have $|A|\leq \frac{1000n}{d^{15}}$, otherwise we are done. Then, let $A'\coloneqq \{y\in V(G)\setminus B: d_B(y)=1\}$. 
We will consider two cases, based off of whether or not $|A'|$ is large.

\subsection{\texorpdfstring{\textbf{Case $1$}. $|A'|\geq n/2$.}{Case 1: |A'| >= n/2}} 

By Lemma~\ref{lem: largesub} with $X:=A'$, we may find $X'\subseteq A'$ and $Y\subseteq (V(G)\setminus X')$ such that:
\begin{itemize}
    \item $|Y|\ge n/(3d^7)$;
    \item $Y$ is an independent set;
    \item $2\leq|N(y)\cap X'|\leq d^6$, for all $y\in Y$;
    \item $d(x')\le 8d$ for all $x'\in X'$.
\end{itemize}Since the size of $A\cup B$ is small we have that $|Y\setminus (A\cup B)|\geq \frac{n}{4d^7}$. For simplicity of notation $Y\coloneqq Y\setminus (A\cup B)$.

We are now going to find an induced subdivision of $K_{d+1}$ with branch vertices in $B$.
First, observe that for every $y\in Y$, there are at most $|N(y)\cap X'|8d\le 8d^7$ other $y'\in Y$ with $N(y)\cap N(y')\cap X'\neq \emptyset$. Thus, we may take $Y'\subseteq Y$ of size $\frac{|Y|}{8d^7+1}\ge \frac{n}{100d^{14}}$, all with disjoint neighborhoods into $X'$.

Fix a $2d$-degenerate ordering of $B$ and a $2d$-degenerate ordering of $X'$ (thus every vertex has at most $2d$ ``right-neighbours'' in this ordering). We now take each vertex in $X'\cup B$ with probability $p\coloneqq\frac{1}{4d^{6}}$ (independently). Let the random subset of $B$ be denoted by $R_B^{(0)}$ and the random subset of $X'$ be $R_{X'}^{(0)}$. Then let $R_B\subset R_B^{(0)}$ be the vertices which also have no right-neighbours in $R_B^{(0)}$.

Our definition of $R_{X'}$ is a bit more involved. By assumption, for every $x\in X'$, there is a unique $b(x)\in B$ where $b(x)\in N(x)\cap B$. We now let $R_{X'}\subset R_{X'}^{(0)}$ be the set of $x\in R_{X'}^{(0)}$ with $b(x)\in R_B$ \textit{and} with zero right-neighbours in $R_{X'}^{(0)}$.

Over the randomness of $R_B$ and $R_{X'}$, we say that $y\in Y'$ is \textit{good} if $|N(y)\cap R_{X'}|=2$ and $N(y)\cap R_B =\emptyset$. Let $Y''\subset Y'$ be the set of good vertices.
\begin{claim}
     We have $\mathbb{E}[|Y''|]\geq \frac{n}{10^5d^{31}}$.
 \end{claim}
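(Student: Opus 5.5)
The plan is to use linearity of expectation. Since $|Y'|\ge n/(100d^{14})$, it suffices to show that every fixed $y\in Y'$ is good with probability at least roughly $10^{-3}d^{-17}$. I would then sum over $y\in Y'$.

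Fix $y\in Y'$ and write $N_y:=N(y)\cap X'$, so that $2\le m:=|N_y|\le d^6$. I would estimate
\[
\mathbb{P}[y\text{ good}]\;\ge\;\sum_{\{x_1,x_2\}\subset N_y}\mathbb{P}\bigl[N(y)\cap R_{X'}=\{x_1,x_2\},\ N(y)\cap R_B=\emptyset\bigr]
\]
and bound each summand from below by a product of independent factors, as in the proof of Lemma~\ref{lem: unbalanced}. For a fixed pair I would require the following:
\begin{itemize}
\item $x_1,x_2\in R^{(0)}_{X'}$, which has probability $p^2$;
\item $b(x_1),b(x_2)\in R^{(0)}_B$, which has probability $p^2$;
\item no other vertex of $N_y$ and no right-neighbour of $x_1,x_2$ lies in $R^{(0)}_{X'}$;
\item no right-neighbour of $b(x_1),b(x_2)$ and no vertex of $N(y)\cap B$ lies in $R^{(0)}_B$.
\end{itemize}
The girth assumption is used to make these requirements compatible. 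Girth at least $5$ gives $b(x_1)\ne b(x_2)$, since otherwise $y x_1 b x_2$ would be a $4$-cycle. It also gives that $b(x_i)\notin N(y)$, since otherwise $y x_i b(x_i)$ would be a triangle, and that $x_1x_2\notin E(G)$. Hence the events required to hold concern different vertices from the events required to fail. The forbidden set has size at most $m+4d+d^{35}$, which is too large for a crude $(1-p)^{|\cdot|}$ bound, because $|N(y)\cap B|$ could be large. I would handle this by noting that $y\in Y\setminus(A\cup B)$ sends at most one edge to $B$ (it is not in $A$). The remaining forbidden set then has size $O(d^6)$, so with $p=1/(4d^6)$ all the ``avoid'' events together cost only a constant factor, say at least $1/8$.

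This gives $\mathbb{P}[y\text{ good}]\ge \tfrac18\binom m2 p^4$. I expect the main obstacle to be the exponent bookkeeping. When $m$ is small (for example $m=2$), this bound is only of order $d^{-24}$. Multiplied by $|Y'|\ge n/(100d^{14})$, it falls short of the claimed $d^{-31}$. So the step I would focus on is recovering the missing factor. My first attempt would be to avoid paying the independence loss twice: the passage from $Y$ to $Y'$ (disjoint neighbourhoods in $X'$) costs a factor $8d^7$, and I would instead take expectations directly over $Y\setminus(A\cup B)$, of size at least $n/(4d^7)$. That gives $n\,p^4/(32d^7)\approx n/(10^4 d^{31})$, which is the claimed order; it is possible the $d^{31}$ was computed with $|Y|$ in place of $|Y'|$. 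The catch is that disjoint neighbourhoods matter for later steps, not for this expectation. I would therefore check whether the subsequent argument can be run on $Y$, or on a subset extracted \emph{after} sampling, with the $8d^7$ loss paid there instead.

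If that fails, the fallback is to prove the per-vertex bound with $\binom m2$ large, using the case split on $m$. This only helps when $m$ is close to $d^6$, so it does not rescue the case $m=2$, and in that case the exponent as stated would need adjusting.
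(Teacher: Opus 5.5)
Your per-vertex estimate is the paper's argument. Fix $x_1,x_2\in N(y)\cap X'$, use $y\notin A$ to get $|N(y)\cap B|\le 1$, use girth to keep the required and forbidden vertices apart, and pay only a constant for the $O(d^6)$ avoidance events. This gives $\mathbb{P}[y\text{ good}]\ge p^4/4=1/(1024d^{24})$. Your compatibility list omits $b(x_1)b(x_2)\notin E(G)$, which is needed so that $b(x_1)$ is not a right-neighbour of $b(x_2)$; such an edge would close a $5$-cycle through $y$, so the girth hypothesis covers it.

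Your exponent check is correct, and it exposes a real slip in the paper. The proof ends with $|Y'|/(1024d^{24})\ge n/(10^5d^{31})$, but $|Y'|\ge n/(100d^{14})$ only yields order $n/d^{38}$. The exponent $31$ is exactly what $|Y|\ge n/(4d^7)$ gives, namely $n/(4096d^{31})$, as you guessed.

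The inequality cannot be rescued as stated. A good $y$ needs some pair $x_1,x_2$ and the distinct vertices $b(x_1),b(x_2)$ all sampled, so $\mathbb{P}[y\text{ good}]\le\binom{m}{2}p^4$. Suppose every $y\in Y'$ has $m=2$, $Y'$ has only its guaranteed size $|Y|/(8d^7+1)$, and $|Y|$ is near $n/(4d^7)$. Then $\mathbb{E}[|Y''|]\le p^4|Y'|\approx n/(8192d^{38})$. So declining to prove the literal statement is the right call. You are also right that splitting on $m$ cannot help when $m=2$.

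The gap is in your repair. Averaging over all of $Y\setminus(A\cup B)$ gives at least $n/(4096d^{31})$ good vertices in expectation, which is the correct replacement statement. But if you then pay the same $8d^7$ factor to restore disjoint neighbourhoods, you are back at $n/d^{38}$.

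The missing observation is that after sampling the loss is only $O(d)$. Two good vertices interfere only through a common neighbour in $R_{X'}$: an edge from $y$ to a vertex $x'\in R_{X'}$ used by another good vertex forces $x'\in N(y)\cap R_{X'}$. Each good vertex has exactly two such neighbours, each of degree at most $8d$. So a greedy choice keeps a $1/(16d+1)$ fraction.

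Even then, the paper's final step $|Y''|\ge 10d^2|B|$ cannot be checked from $|B|\le 100n/d^{34}$. Both sides are of order $n/d^{32}$, and the constants go the wrong way. Compare instead with $|R_B|$, the actual vertex set of $H$, which has $\mathbb{E}|R_B|\le p|B|$. Then show $\mathbb{E}\bigl[\#\{\text{good } y\in Y\}-(16d+1)\cdot 10d^2|R_B|\bigr]>0$, as in the proof of Lemma~\ref{lem: unbalanced}. This holds with a factor of order $d^6$ to spare.
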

 \begin{proof}
    Fix $y\in Y'\subset Y$. By assumption, we have $2\le |N(y)\cap X'|\le d^6$, thus we can fix distinct $x_1,x_2\in N(y)\cap X'$. Also, we have $|N(y)\cap B|\le 1$ since $Y\cap A=\emptyset$. It then follows that
     \[\mathbb{P}[ y \text{ is good}]\geq p^2(1-p)^{d^6}\cdot (1-p)^{2d}(1-p)^{2d} \cdot p^2(1-p)^{2d}(1-p)^{2d}\geq p^4/4=\frac{1}{1024d^{24}},
     \]with the first term lower-bounding the probability $N(y)\cap (R_{X'}\cup R_B)=\{x_1,x_2\}$, the second term lower-bounding the probability $x_1,x_2$ have no right-neighbours in $R_{X'}^{(0)}$ (importantly $x_1x_2\not\in E(G)$ by our girth assumption), and the third term lower-bounding the probability $b(x_1),b(x_2)\subset R_B$ (importantly $b(x_1)$ and $b(x_2)$ do not belong to $N(y)\cap B$, as we do not have triangles).

Hence, the expected number of good vertices is at least $\frac{|Y'|}{1024d^{24}}\geq \frac{n}{10^5 d^{31}} $. 
 \end{proof}
 
Fix a choice of $R_{B}, R_{X'}$ with at least $\frac{n}{10^5 d^{31}}$ good vertices. 
We consider now an auxiliary graph $H$ on the vertex set $R_B$, where we add an edge between $z_1,z_2\in R_B$ if there is a good vertex $y\in Y''$ whose two unique neighbours in $R_{X'}$, say $x,w$ satisfy $b(x)=z_1$ and $b(w)=z_2$. Note that each good vertex $y$ contributes with a distinct auxiliary edge, since our graph $G$ has no cycle of length smaller than $9$. As $|Y''|\geq 10d^2|B|$ (recall $|B|\le \frac{100 n}{d^{34}} \le  \frac{10^7}{d^3}|Y''|$ and that we assume $d\ge 10^8$), Lemma~\ref{thm: subdivision} guarantees $H$ has a subdivision of a $K_{d+1}$. Noting that for any subgraph $H'$ of $H$, the $3$-subdivision of $H$ is an induced subgraph of $G$,\footnote{Here, we highlight that each edge $e\in E(H)$ corresponds to three vertices $y,x_1,x_2$ which all cannot belong to $R_B$. Indeed, $Y\cap B=\emptyset$ implies $y\not\in R_B\subset B$, while $|N(x_1)\cap R_B|=|N(x_2)\cap R_B|=1$ (meaning neither vertex belongs to the independent set $R_B$). The rest quickly follows recalling $R_B,R_{X'},Y''$ are all independent sets of $G$.} we see $G$ indeed contains an induced subdivision of $K_{d+1}$ as desired. This concludes the analysis of Case $1$. 
 
\subsection{\texorpdfstring{\textbf{Case $2$.} $|A'|\leq n/2$.}{Case 2: |A'| <= n/2}}

Let $G'$ be the induced subgraph on $V(G)\setminus B$. Note that $|V(G')| \geq \frac{4n}{5}$ and all but at most $n/2+|A'|$ vertices have degree at least $d$. In particular, $G'$ has at least $n/3$ vertices $x$ with $d_{G'}(x)\ge d$.

We now iteratively delete the vertices of $G'$ having than $d/5$ neighbors remaining in $V(G')$ (essentially in the fashion as in Lemma~\ref{lem: connectedgood}). Formally, let $G'_0:=G'$ and for $t\ge 0$, if $G'_t$ has a vertex $v_t$ with $d_{G'_t}(v_t)<d/5$ update $G'_{t+1}:= G'_t\setminus \{v_t\}$ and continue. Let $S=\{v_1,\dots,v_\tau\}$ denote the set of vertices deleted by this process. We note this cannot be too large.

\begin{claim}
    $|S|< 6\cdot 10^5 d^4|A|$.
\end{claim}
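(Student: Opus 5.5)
The plan is an edge-counting argument on the deletion order, in the spirit of the bound $|D^i|\le \Delta|S^i|$ in the proof of Lemma~\ref{lem: connectedgood}. It should in fact give $|S|\le 2|A|$, which is much stronger than required.

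\textbf{Degree lost to $B$.} Every vertex of $G$ has degree at least $d$, and $G'=G[V(G)\setminus B]$. A vertex $v\in V(G')\setminus(A\cup A')$ sends no edge to $B$, while a vertex of $A'$ sends exactly one. Hence every $v\in V(G')\setminus A$ satisfies $d_{G'}(v)\ge d-1$. Only vertices of $A$ can lose many edges to $B$.

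\textbf{Back and forward edges.} Each edge of $G[S]$ joins some $v_s$ and $v_t$ with $s<t$. Call it a \emph{forward} edge of $v_s$ and a \emph{back} edge of $v_t$.
\begin{itemize}
\item When $v_s$ is deleted, all its forward neighbours still lie in $G'_s$. Since $d_{G'_s}(v_s)<d/5$, it has fewer than $d/5$ forward edges. Counting each edge through its earlier endpoint gives $e(G[S])< \frac d5|S|$.
\item Now take $v_t\in S\setminus A$. Its $G'$-neighbours are either still present in $G'_t$ (fewer than $d/5$ of them) or were deleted earlier. So it has more than $(d-1)-d/5$ back edges. Counting each edge through its later endpoint gives $e(G[S])\ge \left(\frac{4d}{5}-1\right)|S\setminus A|$.
\end{itemize}

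\textbf{Conclusion.} Combining the two bounds,
\[
\left(\tfrac{4d}{5}-1\right)\bigl(|S|-|A|\bigr)\le \left(\tfrac{4d}{5}-1\right)|S\setminus A| \le e(G[S]) < \tfrac d5|S|.
\]
Rearranging gives $\left(\frac{3d}{5}-1\right)|S|<\left(\frac{4d}{5}-1\right)|A|$. For $d\ge 10^8$ this yields $|S|<2|A|\le 6\cdot 10^5d^4|A|$.

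There is one degenerate case. If $A=\emptyset$, the strict inequality forces $S=\emptyset$, and the claim should then be read as $|S|=0$ (or as $|S|\le 6\cdot 10^5d^4|A|$). I would state it in that form.

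The only point needing care is bookkeeping: that the neighbours of $v_t$ outside $G'_t$ are exactly the earlier-deleted vertices together with $B$. This holds because $G'_t=G'\setminus\{v_1,\dots,v_{t-1}\}$ and $G'$ omits only $B$. I do not expect any real obstacle.
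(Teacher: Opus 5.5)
Your argument is correct, and it takes a different and cleaner route than the paper.

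The paper counts edges only inside $G'[S\setminus A]$. It lets $S'\subseteq S\setminus A$ be the vertices with at most one neighbour in $A$, and uses the same forward/back comparison to show $S'$ is a small fraction of $S$. Then, under the contrary assumption $|S|\ge 6\cdot 10^5d^4|A|$, it obtains at least $10^5d^4|A|$ vertices of $S\setminus A$ with at least two neighbours in $A$, and applies Lemma~\ref{lem: unbalanced} to them. So the paper's claim is really conditional on $G$ having no induced $K_{d+1}$-subdivision, and it draws on $2d$-degeneracy and girth through that lemma.

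You instead count all edges of $G[S]$. The edges from $S\setminus A$ back to $S\cap A$ are still charged to the forward degrees ($<d/5$) of the vertices of $S\cap A$. Neighbours in $A\setminus S$ are never back-neighbours, since they survive every step. This yields $|S|<\frac{4d/5-1}{3d/5-1}|A|<2|A|$ unconditionally. You need no appeal to Lemma~\ref{lem: unbalanced}, degeneracy or girth, and your bound is linear rather than $d^4$ in $|A|$. That is more than enough for the only downstream use, namely $(d/5)|S|<n/6$.

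Your remark on $A=\emptyset$ is also right. In that case $S=\emptyset$ and the stated strict inequality reads $0<0$; the paper's proof does not address this. Reading the claim as $|S|\le 6\cdot 10^5d^4|A|$ is harmless.
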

\begin{proof}
     Suppose not, meaning that $|S|\ge 6\cdot 10^5d^{4}|A|$. We seek to derive a contradiction, using the facts that each vertex $x\in S\setminus A$ has degree at least $d-1$ in $G'$ and hence sends many edges inside $S$, while by construction $e(G'[S\setminus A])\leq (d/5) |S\setminus A|$.

     Letting $S'\subset S\setminus A$ be the set of vertices $x$ with $|N(x)\cap A|\le 1$, we have that that $|N(x)\cap S|\ge (4/5)d-2\ge (2/3)d$. Thus $(2/3)d|S'|\le e(G'[S\setminus A])\ge (1/5)d|S|$, implies $|S'|<\frac{3}{10}|S|<(1/3)|S|$.

     Meanwhile, $|S|\le \frac{1}{6}|A|$ by assumption, so we must have at least $|S\setminus (A\cup S')|\ge |S|/2\ge 10^5d^4|A|$ vertices $x\in S\setminus A$ with $|N(x)\cap A|\ge 2$. But then, we may apply Lemma~\ref{lem: unbalanced} (with $A:= A,B:= \{x\in S\setminus A:|N(x)\cap A|\ge 2|\}$) to conclude that $G$ contains an induced subdivision of $K_{d+1}$ as desired. 
\end{proof}

Let us denote the remaining graph $G'\setminus S$ by $G''$. We observe it has the following properties:
\begin{enumerate}[(i)]
    \item $\Delta(G'')\leq d^{35}$;
    \item $\delta(G'')\geq d/5$;
    \item $|\{x\in V(G''):d_{G''}(x)\ge d\}|\geq \frac{n}{6}$;
    \item $girth(G'')\geq 10^5$.
\end{enumerate}

Observe that $(i)$ and $(ii)$ follow by construction of $G''$. Moreover, $(iii)$ follows from the fact $G'$ had at least $n/3$ vertices with degree $d$, and at most $(d/5)|S|<n/6$ of them have a neighbor in $S$ (leaving $n/6$ vertices $x$ with $d_{G''}(x)=d_{G'}(x)\ge d$). Finally, the girth of $G''$ is at least the girth of $G$.

We may now apply Lemma~\ref{lem: maxdegree} to $G''$ to obtain an induced copy of $K_{d+1}$, as required. This concludes Case 2 and proves our theorem.
\end{proof}

\bibliographystyle{plainnat}
\bibliography{references}

@article{BollobasThomason1998TopologicalCompleteSubgraphs,
  author  = {Bollob{\'a}s, B. and Thomason, A.},
  title   = {Proof of a Conjecture of {M}ader, {E}rd{\H{o}}s and {H}ajnal on Topological Complete Subgraphs},
  journal = {European Journal of Combinatorics},
  volume  = {19},
  number  = {8},
  pages   = {883--887},
  year    = {1998},
  doi     = {10.1006/eujc.1997.0188}
}

@article{KomlosSzemeredi1994TopologicalCliques,
  author  = {Koml{\'o}s, J. and Szemer{\'e}di, E.},
  title   = {Topological Cliques in Graphs},
  journal = {Combinatorics, Probability and Computing},
  volume  = {3},
  number  = {2},
  pages   = {247--256},
  year    = {1994},
  doi     = {10.1017/S0963548300001140}
}

@article{Mader,
  author  = {Mader, W.},
  title   = {Topological Subgraphs in Graphs of Large Girth},
  journal = {Combinatorica},
  volume  = {18},
  number  = {3},
  pages   = {405--412},
  year    = {1998},
  month   = mar,
  doi     = {10.1007/PL00009829},
  url     = {https://doi.org/10.1007/PL00009829},
  issn    = {1439-6912}
}

@article{LiuMontgomery2017MaderC4Free,
  author  = {Liu, H. and Montgomery, R.},
  title   = {A proof of {M}ader's conjecture on large clique subdivisions in ${C}_4$-free graphs},
  journal = {Journal of the London Mathematical Society},
  series  = {2},
  volume  = {95},
  number  = {1},
  pages   = {203--222},
  year    = {2017},
  doi     = {10.1112/jlms.12019},
  eprint  = {1605.07791},
  archivePrefix = {arXiv}
}

@incollection{ErdosLovasz1975LLL,
  author    = {Erd{\H{o}}s, P. and Lov{\'a}sz, L.},
  title     = {Problems and results on 3-chromatic hypergraphs and some related questions},
  booktitle = {Infinite and Finite Sets (Colloq., Keszthely, 1973), Vol. II},
  series    = {Colloq. Math. Soc. J{\'a}nos Bolyai},
  volume    = {10},
  pages     = {609--627},
  publisher = {North-Holland},
  address   = {Amsterdam},
  year      = {1975}
}

@book{AlonSpencerProbMethod,
  author    = {Alon, N. and Spencer, J. H.},
  title     = {The Probabilistic Method},
  publisher = {Wiley},
  edition   = {3},
  year      = {2008}
}

@article{AlonHooryLinial2002MooreIrregular,
  author  = {Alon, N. and Hoory, S. and Linial, N.},
  title   = {The {M}oore bound for irregular graphs},
  journal = {Graphs and Combinatorics},
  volume  = {18},
  number  = {1},
  pages   = {53--57},
  year    = {2002},
  doi     = {10.1007/s003730200002}
}

@article{PenevThomasseTrotignon2014Isolating,
  author  = {Penev, I. and Thomass{\'e}, S. and Trotignon, N.},
  title   = {Isolating highly connected induced subgraphs},
  journal = {arXiv preprint arXiv:1406.1671},
  year    = {2014}
}

@article{KuhnOsthus2006ImprovedBounds,
  author  = {K{\"u}hn., D. and Osthus, D.},
  title   = {Improved bounds for topological cliques in graphs of large girth},
  journal = {SIAM Journal on Discrete Mathematics},
  volume  = {20},
  number  = {1},
  pages   = {62--78},
  year    = {2006},
  doi     = {10.1137/040617765}
}

@article{WollanThomas,
  author  = {Thomas, R. and Wollan, P.},
  title   = {An improved extremal function for graph linkages},
  journal = {European J. Combin.},
  volume  = {26},
  pages   = {309--324},
  year    = {2005},

}

\end{document}